\theoremstyle{plain}
\newtheorem{thm}{Theorem}[section]
\newtheorem{lemma}[thm]{Lemma}
\newtheorem{prop}[thm]{Proposition}
\newtheorem{cor}[thm]{Corollary}
\newtheorem*{thm1*}{Theorem \ref{thm:theorem1}}
\newtheorem*{thm2*}{Theorem \ref{thm:theorem2}}
\newtheorem*{lemma*}{Lemma}
\newtheorem*{prop*}{Proposition}
\newtheorem*{cor*}{Corollary}
\newtheorem*{conj*}{Conjecture}
\theoremstyle{definition}
\newtheorem{defn}[thm]{Definition}
\newtheorem{ex}[thm]{Example}
\newtheorem{ques}[thm]{Question}
\theoremstyle{remark}
\newtheorem*{rmk}{Remark}
\newcommand{\rr}{\mathbb{R}}
\newcommand{\calc}{\mathcal{C}}
\newcommand{\ind}{\mbox{$\perp \kern-5.5pt \perp$}}
\newcommand{\captionfonts}{\small}
\long\def\@makecaption#1#2{%
  \vskip\abovecaptionskip
  \sbox\@tempboxa{{\captionfonts #1: #2}}%
  \ifdim \wd\@tempboxa >\hsize
    {\captionfonts #1: #2\par}
  \else
    \hbox to\hsize{\hfil\box\@tempboxa\hfil}%
  \fi
  \vskip\belowcaptionskip}
\begin{document}

\title{Identifiability results for several classes of linear compartment models}

\author{Nicolette Meshkat}
\author{Seth Sullivant}
\author{Marisa Eisenberg}

              \email{ncmeshka@ncsu.edu}
              \email{smsulli2@ncsu.edu } 
							\email{marisae@umich.edu}

              \address{Department of Mathematics, Box 8205, North Carolina State University, Raleigh, NC, 27695-8205, USA }
							\address{Departments of Epidemiology and Mathematics, University of Michigan,
               Ann Arbor, Michigan 48109-2029 } 
           

\maketitle

\begin{abstract}
Identifiability concerns finding which unknown parameters of a model can be estimated 
from given input-output data.  If some subset of the parameters of a model cannot 
be determined given input-output data, then we say the model is unidentifiable.
In past work we identified a class of models, that we call identifiable cycle models, 
which are not identifiable but have the simplest possible identifiable functions 
(so-called monomial cycles).  Here we show how to modify identifiable cycle models
by adding inputs, adding outputs, or removing leaks, in such a way that
we obtain an identifiable model.  We also prove a constructive result on how to
combine identifiable models, each corresponding to strongly connected graphs, into a larger 
identifiable model.  We apply these theoretical results to 
several real-world biological models from physiology, cell biology, and ecology. 
  
\par
\noindent \textit{Keywords:} Identifiability, Linear compartment models, Identifiable functions
\end{abstract}


\section{Introduction}

Parameter identifiability analysis for ODE models 
addresses the question of which unknown parameters can be quantified 
from given input-output data.  This paper is concerned with \textit{structural identifiability} 
analysis, that is, whether the model parameters can be identified from perfect input-output data (noise-free and of any duration required).
Structural identifiability is a necessary condition for \textit{practical identifiability}
which is identifiability analysis in the presence of noisy
and imperfect data.  Thus, structural identifiability is an important step in the parameter estimation problem, since failure to recover parameters in the ideal case implies failure in the imperfect case as well.

In the context of structural identifiability, if the parameters of 
a model have a unique or finite number of values 
given input-output data, then the model and its parameters are said 
to be \textit{identifiable}.  However, if some subset of the parameters 
can take on an infinite number of values and yet yield the same input-output data, 
then the model and this subset of parameters are called \textit{unidentifiable}.  
It is generally undesirable to work directly with an unidentifiable model,
and it is natural to ask what should be done (mathematically) when one is encountered.
One solution is to find the functions of parameters that can be identified
from given input-output data and reparametrize the model in terms of these
identifiable functions.
Another approach is to directly modify the model by reducing the number of fitted
parameters, e.g. by making some parameters depend on other parameters, or fixing parameters to a given value.
Finally, the input-output assumptions can be modified by adding inputs to the model,
adding outputs, or both.

In this paper, we focus on linear compartment models.  
Linear compartment models have an extensive history of practical use in many biological applications, including pharmacokinetics, toxicology, cell biology, physiology, and ecology \cite{Berman1956, Berman1962, DiStefano, Mulholland1974, Wagner1981}, dating back to the Widmark and Tandberg's first use of a one-compartment model with intravenous bolus injection and constant infusion inputs in 1924 \cite{Wagner1981, WidmarkTandberg}. Indeed, linear compartment models in pharmacokinetics are now fairly ubiquitous, with most kinetic parameters for drugs (such as half-lives, residence times, etc.) based at least in part on linear compartment model theory \cite{Tozer1981, Wagner1981}. While often the practical applications of linear compartment models in pharmacokinetics require only one or two compartments (e.g. representing blood plasma and tissue) \cite{Tozer1981, Wagner1981}, there are numerous examples which use larger, more complex compartment models, with many compartments accounting for different organs and metabolites (e.g. \cite{Berman1962, Birge1969, DArgenio1988, DiStefano1988, Feng1991}). 

In a typical biological application, the mass or concentration of a substance (e.g. drug concentration in an organ) is represented by a compartment, and the transfer of material from one compartment to another is given by a constant rate parameter, called an \textit{exchange rate}.  The transfer of material from a compartment leaving the system is given by a constant rate parameter called the \textit{leak rate}, and any compartment containing such a leak is called a \textit{leak compartment}.  An input represents the input of material to a particular compartment of the system (e.g. IV drug input) and an output represents a measurement from a compartment (e.g. drug concentration in an organ), where such compartments are called \textit{input compartments} and \textit{output compartments}, respectively. The resulting ODE system of equations (see Equation \ref{eq:main}) is linear.  This linearity feature has a nice mathematical consequence in that the model can be represented by a directed graph.  Thus, we can analyze identifiability problems in terms of the combinatorial 
structure of that graph.  In previous work \cite{MeshkatSullivant} the first and second
authors analyzed linear compartment models from the standpoint of
finding the identifiable functions in an unidentifiable model and reparametrizing
the model in terms of those functions.  A sufficient condition
on a graph (inductively strongly connected) was developed which allows for an easy method to deduce
that a simple identifiable reparametrization exists.  In the present paper,
we explore the extent to which combinatorial tools can be developed for the
other two approaches to dealing with unidentifiable models namely:  removing
leaks from a linear compartment model, and adding inputs and outputs.  

In particular, we will use as a starting point the models which we analyzed
in \cite{MeshkatSullivant}.  These are models that satisfy the following 
assumptions:

\begin{enumerate}
\item  Every compartment has a leak.
\item  There is a single input and output, and both are in the same compartment.
\item  The model corresponds to a strongly connected graph. 
\item  All the \textit{monomial cycles} in the graph are identifiable functions.
\end{enumerate}

We call a model that satisfies these conditions an \textit{identifiable cycle model}.
Conditions (1) and (2) are quite restrictive from the applications standpoint,
and we are primarily interested in understanding how to get rid of these conditions.
On the other hand, condition (4) is a very natural and useful condition mathematically. 
In \cite{MeshkatSullivant} the first and second authors showed that this condition
is equivalent to the existence of an identifiable scaling reparametrization, and showed a simple condition, based on the model graph being inductively strongly connected, which is sufficient to ensure (4).  
Our main question in this work is the following:

\begin{ques}
Given an identifiable cycle model, how many leaks should be removed or how 
many inputs/outputs should be added in order to render it identifiable?
\end{ques}

Our motivation is to perform identifiability analysis on models 
that are used in applications.  Specific such applications are
described in Section \ref{sec:examples}.
Here is an example of the type of general result that we will derive 
that can be used to prove identifiability of various biological models.

\begin{thm} \label{thm:theorem1} Let $M$ be an identifiable cycle model.  
If the model is changed to have exactly one leak, 
then the resulting model is locally identifiable.
\end{thm}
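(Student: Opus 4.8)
\noindent\emph{Proof idea.}
The plan is to reparametrize the model, reduce local identifiability of the one-leak model to the transversality of a scaling orbit with a linear subspace, and close with a short determinant computation about compartmental matrices. Let $G$ be the strongly connected graph of $M$, with input and output in compartment $1$, and let $M_\ell$ be the model obtained from $M$ by keeping only the leak in a chosen compartment $\ell$. As for any single-input single-output linear compartment model, the input--output behavior is captured by the transfer function $H(s)=[(sI-A)^{-1}]_{11}=p(s)/q(s)$, where $q(s)=\det(sI-A)$ and $p(s)$ is the $(1,1)$ cofactor of $sI-A$, and the field of identifiable functions is generated by the coefficients of $p$ and $q$. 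Thus $M_\ell$, which has $|E|+1$ parameters (one leak rate and $|E|$ exchange rates), is locally identifiable precisely when the map from its parameters to these coefficients has a full-rank Jacobian at a generic parameter.

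First I would reparametrize $M$: replace each leak rate $a_{0i}$ by the total outflow rate $\mu_i$ = (leak rate of $i$) + (sum of exchange rates on the edges leaving $i$), so that $A$ has diagonal $(-\mu_1,\dots,-\mu_n)$. This is a triangular, hence invertible, change of coordinates; afterward the parameters of $M$ are $\mu_1,\dots,\mu_n$ together with the exchange rates, and the parameter space of $M_\ell$ becomes the linear subspace $X$ of dimension $|E|+1$ obtained by requiring $\mu_i$ to equal the sum of the exchange rates on the edges leaving $i$, for every $i\neq\ell$. The diagonal group $D=\mathrm{diag}(1,d_2,\dots,d_n)$ acts by $A\mapsto D^{-1}AD$, preserving $H$ and fixing every $\mu_i$. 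Since $M$ is an identifiable cycle model, by \cite{MeshkatSullivant} all of its non-identifiability is explained by this scaling action (it admits an identifiable scaling reparametrization, its identifiable field being generated, up to finite extension, by the $\mu_i$ and the monomial cycles). Consequently, near a generic $\theta\in X$, the parameter vectors with the same transfer function as $\theta$ form a union of scaling orbits, so it suffices to show that the scaling orbit through $\theta$ meets $X$ in finitely many points near $\theta$; for that it is enough that the only infinitesimal scaling tangent to $X$ at $\theta$ be trivial.

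An infinitesimal scaling $\delta=(0,\delta_2,\dots,\delta_n)$ fixes the $\mu_i$ and changes the exchange rate on an edge from $j$ to $i$ by $(\delta_j-\delta_i)$ times that rate; it is tangent to $X$ exactly when, for every $i\neq\ell$, the sum over the edges leaving $i$ of these changes is zero. Writing $A_0$ for the leak-free compartmental matrix of $G$ (that is, $A$ with each $\mu_i$ replaced by the sum of the exchange rates on the edges leaving $i$), this is the linear system $\Lambda\delta'=0$ in the unknowns $\delta'=(\delta_2,\dots,\delta_n)$, where $\Lambda$ is obtained from $-A_0^{\mathsf T}$ by deleting row $\ell$ and column $1$. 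So the theorem reduces to the claim $\det\Lambda\neq 0$. Up to sign, $\det\Lambda$ is a maximal minor of $A_0$; and $A_0$ is a Metzler matrix with zero column sums, irreducible because $G$ is strongly connected, so $0$ is a simple eigenvalue of $A_0$ with a strictly positive right eigenvector $v$ while $\mathbf{1}^{\mathsf T}A_0=0$. Hence $\mathrm{adj}(A_0)$ has rank one and equals $c\,v\,\mathbf{1}^{\mathsf T}$ for some $c\neq 0$, so every maximal minor of $A_0$ is $\pm c$ times a coordinate of $v$ and is nonzero; thus $\det\Lambda\neq 0$.

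I expect the real difficulty to be not this determinant but the middle step. One must justify that restricting from $M$ to the sub-model $M_\ell$ creates no input--output symmetry beyond the diagonal scaling --- this is exactly the point where the hypothesis that $M$ is an \emph{identifiable cycle model} (conditions (1)--(4), via the characterization in \cite{MeshkatSullivant}) is indispensable --- and one must verify with some care that the ``equal transfer function $\Leftrightarrow$ common scaling orbit'' description, which is a generic statement about the whole parameter space of $M$, still controls the situation near points of the lower-dimensional subspace $X$, so that the infinitesimal criterion above genuinely decides local identifiability of $M_\ell$. Once that is in place, the argument yields a touch more than claimed: $\det\Lambda\neq 0$ for \emph{every} choice of positive exchange rates and \emph{every} surviving leak compartment $\ell$, so the trivial scaling is always an isolated solution of the orbit-meets-$X$ equations.
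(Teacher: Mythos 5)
Your argument is, at its core, the same as the paper's: restrict to the $(|E|+1)$-dimensional subspace cut out by the no-leak conditions, and show that the scaling torus $A\mapsto D^{-1}AD$ (which is exactly the residual non-identifiability of an identifiable cycle model) meets that subspace transversally, via the nonsingularity of a reduced Laplacian-type matrix. Your proof of $\det\Lambda\neq 0$ is a genuine alternative to the paper's Lemma \ref{lem:sliceparam}: the paper evaluates the Jacobian of the cycle map at the all-ones point and argues that the cycle space of $G$ meets the row space of the ``non-leak'' rows only at the origin by propagating zero weights backward from the leak vertex; you instead observe that $\Lambda^{\mathsf T}$ is a maximal minor of the irreducible zero-column-sum Metzler matrix $A_0$, so that $\mathrm{adj}(A_0)=c\,v\,\mathbf{1}^{\mathsf T}$ with $v>0$ and $c\neq 0$ forces every such minor to be nonzero. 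This is clean, works at every positive parameter value rather than at a single test point, and (equivalently, via the matrix-tree theorem) is the same strong-connectivity fact the paper is using; it would serve as a drop-in replacement for the linear-independence argument in Lemma \ref{lem:sliceparam}.

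The step you flag as ``the real difficulty'' is, however, a genuine gap as written: you invoke the statement that near a \emph{generic} point of the full parameter space the fiber of the coefficient map is a union of scaling orbits, and then apply it at points of the measure-zero subspace $X$, where a priori it need not hold. The paper closes exactly this gap by never arguing about fibers of $c$ on $X$ directly. Instead it uses the cycle map $f(A)=(a^{C})_{C}$, whose fibers are \emph{exactly} finite unions of scaling orbits at every point with nonzero coordinates (this is Proposition \ref{prop:kernelincidence}: the cycle indicator vectors span the kernel of the incidence matrix, so the kernel torus of the monomial map $f$ is precisely the scaling torus with $d_1=1$). Your transversality computation then shows $f|_{X}$ has discrete fibers, hence $\dim f(X)=|E|+1=\dim\mathrm{image}\,f$, hence $\overline{f(X)}=\overline{\mathrm{image}\,f}$; and since Theorem \ref{thm:oldstandardmodels} gives $c=\Psi\circ f$ with $\dim\mathrm{image}\,c=\dim\mathrm{image}\,f$, it follows that $\dim c(X)=|E|+1$, which is the number of parameters of the one-leak model. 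So the missing step is fillable with machinery you already cite from \cite{MeshkatSullivant}, but the correct formulation is ``it suffices to bound $\dim f(X)$ from below,'' not ``fibers of $c$ over $X$ are orbit unions''; with that substitution your proof is complete.
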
 
We will also provide in Section \ref{sec:generalize} an extension of this result which contains a combination of removing leaks and adding inputs or outputs.  In Section \ref{sec:extensions}, we prove that combining two strongly connected identifiable submodels into a larger model with a one-way flow results in an identifiable model.

Previous work on finding sufficient conditions to obtain identifiability has been done in both the linear and nonlinear setting.  Vicini et al \cite{Vicini} find conditions for mammillary and catenary linear compartmental models to be globally identifiable.  We note that mammillary and catenary models correspond to inductively strongly connected graphs and thus belong to a subset of the models handled in Theorem \ref{thm:theorem1}.  Anguelova et al \cite{Anguelova} find a minimal set of outputs to guarantee local identifiability of a nonlinear model.  While they provide a much more general result for nonlinear models, their result only applies to adding outputs and does not investigate the effect of adding inputs.  Our result in Section \ref{sec:generalize} thus allows one to obtain identifiability by adding a combination of inputs and outputs.  Vajda \cite{Vajda} gives necessary and sufficient conditions for a linear compartmental model and its submodels to be globally identifiable using structural equivalence.  Our result in Section \ref{sec:extensions} only requires that our submodels be strongly connected, whereas the result in \cite{Vajda} requires additional assumptions.

An important caveat we would like to address is that our main intention is to give identifiability results for families of linear compartment models.  The theorems and proofs are constructed in terms of starting with an identifiable cycle model and then adjusting it to evaluate other models with the same graphical `backbone' of exchanges. However, clearly an adjusted model cannot be applied for the same biological application as the original identifiable cycle model.  Thus, for biological purposes, our results can be applied in reverse order.  In other words, for given biological model, if the equivalent model with the same `backbone' but with leaks added to all compartments and only one combined input/output compartment yields an identifiable cycle model, this property can dictate the identifiability of our original given biological model.

The organization of the paper is as follows. The next section provides introductory material on compartment models and how to derive the input-output equations.  Section \ref{sec:ident} gives definitions of identifiability and identifiable cycle models.  Section \ref{sec:leaks} explains how, starting with an identifiable cycle model, removing all leaks except one results in identifiability.  Section \ref{sec:generalize} explains how, starting with an identifiable cycle model, removing a subset of leaks and adding
inputs/outputs results in identifiability.  Section \ref{sec:extensions} demonstrates how to combine identifiable submodels into a larger model and obtain identifiability of the full model. Section \ref{sec:examples} includes various examples of biological models satisfying our properties.


\section{Linear Compartment Models and Their Input-Output Equations}\label{sec:model}

In this section, we introduce general linear compartment models and 
explain how to calculate their input-output equations.
In the next section, we will describe the identifiability problem
for these models.  

Let $G$ be a directed graph with vertex set $V$ and set of 
directed edges $E$.  Each vertex $i \in V$ corresponds to a
compartment in our model and an edge $j \rightarrow i$ denotes 
a direct flow of material from compartment $j$ to
compartment $i$.  Also introduce three subsets of the vertices
$In, Out, Leak \subseteq V$ corresponding to the
set of input compartments, output compartments, and leak compartments
respectively.  To each edge $j \rightarrow i$ we associate
an independent parameter $a_{ij}$, the rate of flow
from compartment $j$ to compartment $i$.  
To each leak node $i \in Leak$, we associate an independent
parameter $a_{0i}$, the rate of flow from compartment $i$ leaving the system.

We associate a matrix $A(G)$ to the graph and the set $Leak$
 in the following way:
\[
  A(G)_{ij} = \left\{ 
  \begin{array}{l l l}
    -a_{0i}-\sum_{k: i \rightarrow k \in E}{a_{ki}} & \quad \text{if $i=j$ and } i \in Leak\\
        -\sum_{k: i \rightarrow k \in E}{a_{ki}} & \quad \text{if $i=j$ and } i \notin Leak\\
    a_{ij} & \quad \text{if $j\rightarrow{i}$ is an edge of $G$}\\
    0 & \quad \text{otherwise}\\
  \end{array} \right.
\]
For brevity, we will often
 use $A$ to denote $A(G)$.
Then we construct a system of linear ODEs with inputs and outputs associated to the quadruple
$(G, In, Out, Leak)$ as follows:
\begin{equation} \label{eq:main}
\dot{x}(t)=Ax(t)+u(t)  \quad \quad y_i(t)=x_i(t)  \mbox{ for } i \in Out
\end{equation}
 where $u_{i}(t) \equiv 0$ for $i \notin In$.
 The coordinate functions $x_{i}(t)$ are the state variables, the 
 functions $y_{i}(t)$ are the output variables, and the nonzero functions $u_{i}(t)$ are
 the inputs.  The resulting model is called a   \textit{linear compartment model}.  

We use the following convention for drawing linear compartment models \cite{DiStefano}.  Numbered vertices represent compartments, outgoing arrows
from the compartments represent leaks, an edge with a circle coming out of a compartment represents an output, 
and an arrowhead pointing into a compartment represents an input.
 
\begin{figure}
\begin{center}
\resizebox{!}{3cm}{
\includegraphics{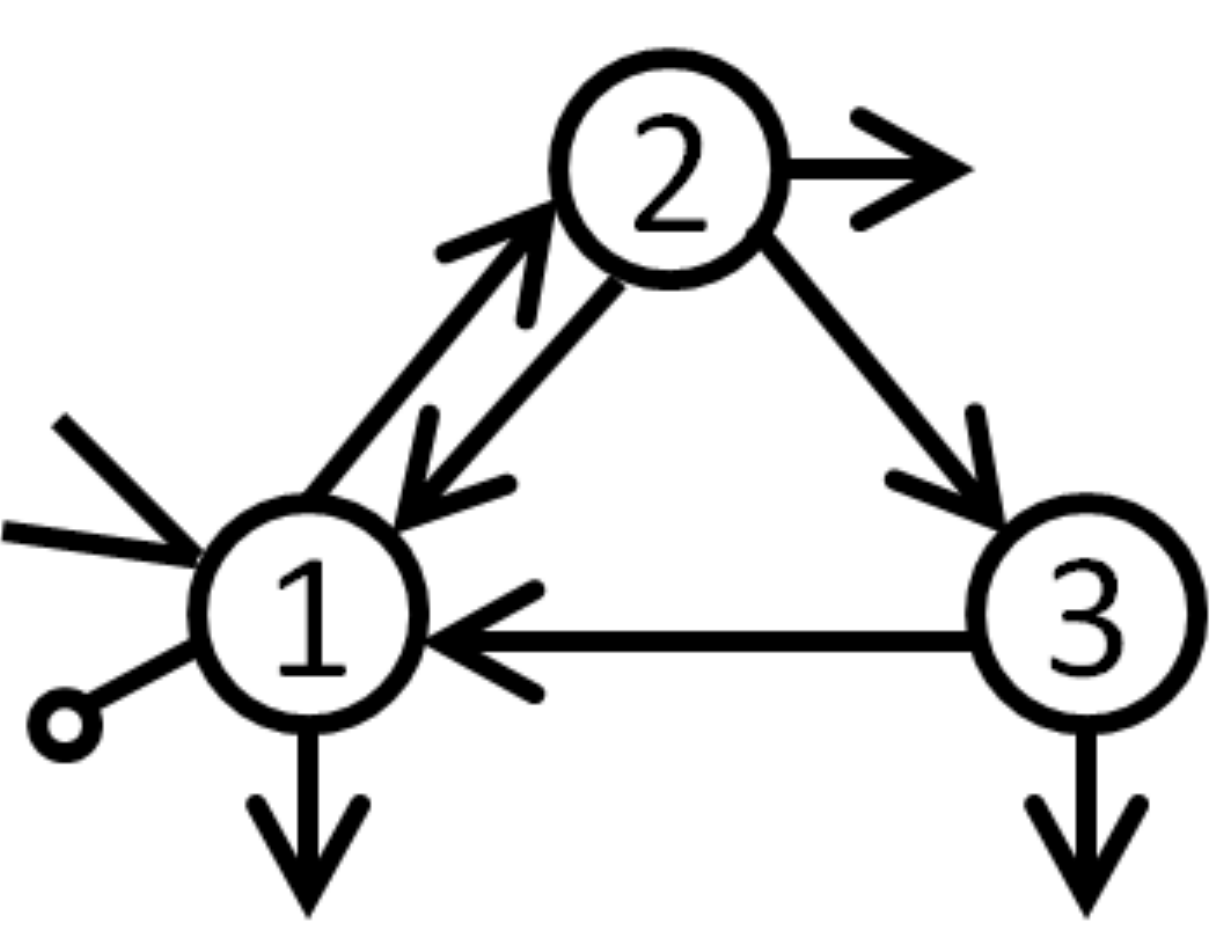}}
\end{center}\caption{A 3-compartment model}
\label{fig:3comp3leaks}
\end{figure}

\begin{ex}
For the compartment model in Figure \ref{fig:3comp3leaks}, the ODE system has the following form:

$$
\begin{pmatrix} 
\dot{x}_1 \\
\dot{x}_2 \\
\dot{x}_3 \end{pmatrix} = {\begin{pmatrix} 
-(a_{01}+a_{21}) & a_{12} & a_{13} \\
a_{21} & -(a_{02}+a_{12}+a_{32}) & 0 \\
0 & a_{32} & -(a_{03}+a_{13}) 
\end{pmatrix}} {\begin{pmatrix}
x_1 \\
x_2 \\
x_3 \end{pmatrix} } + {\begin{pmatrix}
u_1 \\
0 \\
0 \end{pmatrix}}
$$
$$ y_1=x_1.$$
\end{ex}
 
We now define the concepts of $\textit{strongly connected}$ and $\textit{inductively strongly connected}$, which will be important concepts in this paper. 

\begin{defn} A directed graph $G$ is \textit{strongly connected} if there exists a directed path from each vertex to every other vertex.  A directed graph $G$ is \textit{inductively strongly connected} with respect to vertex $1$ if each of the induced subgraphs $G_{\{1, \ldots, i\}}$ is strongly connected for $i = 1, \ldots, |V|$ for some ordering of the vertices $1,\ldots,i$ which must start at vertex $1$.
\end{defn}

\begin{ex} The graph in Figure \ref{fig:3comp3leaks} is inductively strongly connected for the ordering of vertices $\{1,2,3\}$, so that $G_{\{1,2\}}$ is strongly connected, and $G_{\{1, 2, 3\}}$ is strongly connected.
\end{ex}

We assume that we can only observe the inputs $u_j$ and the outputs $y_i$
with $j \in In$ and $i \in Out$.
The state variable $x$ and the parameter entries of $A$ are unobserved and unknown.  
Since we can only observe the input and output to the system, 
we are interested in relating these quantities by forming the 
\textit{input-output equations}, i.e.~equations purely in terms of 
input, output, and parameters that describe the dynamics of the inputs
and outputs alone.  The structure of the input-output equations
plays a significant role in the structural identifiability problem,
as we explain in the next section. 

There have been several methods proposed to find the input-output 
equations of nonlinear ODE models 
\cite{Audoly2001, EvansChappell, Ljung, Meshkat2, Pohjanpalo}, 
but for linear models the problem is much simpler.  We use Cramer's rule:

\begin{thm}\label{thm:ioeqn}
Let $\partial$ be the differential operator $d/dt$ and let 
$A_{ji}$ be the submatrix of $\partial{I}-A$ obtained by deleting the $jth$ row and the $ith$ column
of $\partial{I}-A$.  Then the input-output equations are of the form:
$$\frac{\det(\partial{I}-A)}{g_i}y_i=\sum_{j \in In} (-1)^{i + j} \frac{\det(A_{ji})}{g_i} u_j$$
where $g_i$ is the greatest common divisor of  $\det(\partial{I}-A)$, $\det(A_{ji})$ such that $j \in In$ for a given $i \in Out$.
\end{thm}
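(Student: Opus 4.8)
The plan is to derive the input-output equations directly from the Laplace transform (or equivalently, from treating $\partial$ as a formal indeterminate) of the ODE system \eqref{eq:main}. First I would apply the operator $\partial I - A$ to the state vector: rewriting \eqref{eq:main} as $(\partial I - A)x = u$, where $u$ is the vector whose $j$th entry is $u_j$ for $j \in In$ and zero otherwise. Since $\partial I - A$ is a square matrix over the ring of differential operators (equivalently, after Laplace transform, an invertible matrix over the field of rational functions in the transform variable, as $\det(\partial I - A)$ is a nonzero polynomial), I would solve for each state coordinate $x_i$ by Cramer's rule. This expresses $x_i$ as $\det(B_i)/\det(\partial I - A)$, where $B_i$ is $\partial I - A$ with its $i$th column replaced by $u$.

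Next I would expand $\det(B_i)$ along that replaced $i$th column. Only the rows $j$ with $j \in In$ contribute a nonzero term, and the cofactor of the $(j,i)$ entry is $(-1)^{i+j}\det(A_{ji})$ in the notation of the statement (deleting row $j$ and column $i$). This yields
$$
\det(\partial I - A)\, x_i \;=\; \sum_{j \in In} (-1)^{i+j}\det(A_{ji})\, u_j.
$$
Since $y_i = x_i$ for $i \in Out$, substituting gives the equation with $g_i = 1$. To obtain the stated form, I would then divide both sides by $g_i$, the greatest common divisor (in the polynomial ring in $\partial$) of $\det(\partial I - A)$ and the $\det(A_{ji})$ for $j \in In$; this is well-defined and clears the common factor, producing exactly the displayed input-output equation.

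The main technical point to be careful about — rather than a genuine obstacle — is justifying that manipulating $\partial$ as a formal scalar is legitimate: one should note that $\partial$ commutes with the constant matrix $A$ and with itself, so the ring of operators here is the commutative polynomial ring $\mathbb{R}[\partial]$ acting on the function $x_i(t)$, and Cramer's rule is a polynomial identity that holds over any commutative ring. Equivalently, one can pass to Laplace transforms under suitable initial conditions, derive the identity over $\mathbb{R}(s)$, and transform back. A secondary point is to confirm the sign $(-1)^{i+j}$: the cofactor expansion along column $i$ introduces $(-1)^{i+j}$ times the minor obtained by deleting row $j$ and column $i$, which matches the definition of $A_{ji}$ given in the statement. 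With these observations in place, the proof is a direct application of Cramer's rule and cofactor expansion.
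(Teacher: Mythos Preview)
Your proposal is correct and follows essentially the same approach as the paper: rewrite the system as $(\partial I - A)x = u$, apply Cramer's rule to solve for $x_i$, expand the numerator determinant along the replaced column, substitute $y_i = x_i$, and then divide through by the common factor $g_i$. If anything, you supply slightly more justification than the paper does (the remark that $\mathbb{R}[\partial]$ is commutative so Cramer's rule applies as a polynomial identity), but the argument is the same.
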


\begin{proof} 
We can formally rewrite our ODE system as:
$$
(\partial{I}-A)x=u.
$$
Using formal manipulations with this operator, we can use Cramer's Rule to get that 
$$
x_i = \det(B_i)/\det(\partial{I}-A)
$$
where $B_i$ is the matrix $(\partial{I}-A)$ with the $ith$ column replaced by $u$. 
Since $u$ has nonzero entries $u_j$ when $j \in In$ then $\det(B_i)$ 
can be expanded using the Laplace expansion down the $i$th column 
(over $j \in In$) of $(-1)^{i +j}\det(A_{ji})u_j$.   
Then replacing $x_i$ with $y_i$, we get the input-output equation:
$$
\det(\partial{I}-A)y_i=\sum_{j \in In} (-1)^{i +j}\det(A_{ji})u_j.
$$ 
This equation is not necessarily minimal, and we must remove common factors if they appear.  This is
the reason for the division by the greatest common divisor $g_i$ in the statement
of the theorem.
\end{proof}  

We now find a sufficient condition for the greatest common divisor $g_i$ in Theorem \ref{thm:ioeqn} to be $1$.

\begin{prop} \label{prop:detnonzero} Let $G$ be strongly connected.  If there is at least one leak, then $\det(A)$ is nonzero.
\end{prop}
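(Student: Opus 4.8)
The plan is to show that the matrix $A = A(G)$ is strictly diagonally dominant in a weak sense that, combined with strong connectivity and the presence of a leak, forces invertibility. Observe that each column of $-A$ has the following structure: the diagonal entry in column $i$ is $a_{0i} + \sum_{k : i \to k \in E} a_{ki}$ (with the $a_{0i}$ term present exactly when $i \in Leak$), and the off-diagonal entries in column $i$ are $-a_{ki}$ for each edge $i \to k$. Hence the $i$th column sum of $-A$ is exactly $a_{0i}$, which is $0$ unless $i$ is a leak compartment. So $-A^{T}$ is a matrix with nonnegative diagonal, nonpositive off-diagonal entries, and all row sums nonnegative, with at least one row sum strictly positive (the leak row). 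This is precisely the setting of a weakly diagonally dominant matrix whose associated digraph is strongly connected.

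The key step is then to invoke (or reprove in two lines) the standard fact: a weakly diagonally dominant matrix that is irreducible and has at least one row where the dominance is strict is nonsingular — this is the Taussky/Olga Taussky-Todd refinement of the Gershgorin circle theorem, often phrased as ``irreducibly diagonally dominant matrices are invertible.'' Irreducibility of $-A^{T}$ corresponds exactly to strong connectivity of $G$ (the off-diagonal support of $-A^{T}$ in position $(i,k)$ is nonzero iff $i \to k$ is an edge of $G$), and the strict row corresponds to the leak compartment. Since $\det(-A^{T}) = (-1)^{|V|}\det(A)$, this gives $\det(A) \ne 0$.

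If I did not want to cite Taussky, I would argue directly: suppose $Av = 0$ for some nonzero $v$, i.e. $A^{T}$ applied on the left, so work with $-A^{T} w = 0$ where I pass to the eigenvector of $-A^{T}$; let $i$ be an index maximizing $|w_i|$. Weak diagonal dominance forces $|w_k| = |w_i|$ for every $k$ with $(-A^{T})_{ik} \ne 0$, and by strong connectivity this propagates to all indices, so $|w_k|$ is constant; examining the leak row, where the diagonal strictly exceeds the off-diagonal sum, then yields $w = 0$, a contradiction. The one point requiring a little care is the sign/transpose bookkeeping — the column sums of $-A$ (equivalently the row sums of $-A^{T}$) are what vanish off the leaks — so I would state explicitly at the outset that I am applying the dominance argument to $-A^{T}$, not to $-A$ itself.

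The main obstacle, such as it is, is purely expository: making sure the reader sees that ``strongly connected graph'' translates to ``irreducible matrix'' for the correct matrix ($-A^{T}$), and that the nonnegativity of all parameters $a_{ij}, a_{0i}$ (which is implicit in the compartmental modeling setup) is what makes the row sums genuinely nonnegative rather than merely real. Once that dictionary is in place the proof is immediate.
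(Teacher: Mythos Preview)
Your proposal is correct and follows essentially the same route as the paper: both invoke the Taussky criterion that an irreducibly (weakly) diagonally dominant matrix with at least one strict row is nonsingular, identifying irreducibility with strong connectivity of $G$ and the strict row with the leak compartment. If anything, you are more careful than the paper's own one-paragraph proof, which simply asserts that ``$A$ is weakly diagonally dominant'' without making the column-versus-row (equivalently, $A$ versus $A^{T}$) distinction explicit.
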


\begin{proof} If there is at least one leak, then the matrix $A$ is weakly diagonally dominant.  Since the graph $G$ is strongly connected, then the matrix $A$ is irreducible, meaning it is not similar via a permutation to a block upper triangular matrix.  Thus, the matrix $A$ is irreducibly diagonally dominant, which means it is full rank, and thus $\det(A)$ is nonzero.
\end{proof}

\begin{thm} \label{thm:irred} Let $G$ be strongly connected.  If there is at least one leak, then for a generic choice of parameters, the characteristic polynomial $\det(\lambda{I}-A)$ is irreducible.
\end{thm}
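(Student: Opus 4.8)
The plan is to show that the characteristic polynomial of $A$ is irreducible over the field $\kk(\bfa)$ of rational functions in the parameters $a_{ij}, a_{0i}$ by exhibiting a single specialization of the parameters for which $\det(\lambda I - A)$ is irreducible over $\qq$ (or even over $\qq(\lambda)$ after a suitable substitution). Since irreducibility of a polynomial with parameters is an open condition — the resultant-type obstructions to having a nontrivial factorization are themselves polynomial conditions on the parameters — it suffices to find one parameter choice that works, and then irreducibility holds for a generic choice.

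The first step is to reduce to a convenient combinatorial normal form. Because $G$ is strongly connected, it contains a spanning cycle cover structure; more to the point, after specializing all rate parameters appropriately I can force $A$ (up to a permutation similarity, which does not affect the characteristic polynomial) to be a companion-type matrix. Concretely, I would pick a Hamiltonian-like closed walk through all vertices using only edges of $G$ — strong connectivity guarantees a closed walk visiting every vertex — and set the rates along that walk equal to $1$ while setting all other exchange rates to $0$; then put a single leak rate equal to some value $c$ on the compartment where the walk ``closes up.'' With this specialization $A$ becomes (after relabeling vertices along the walk) essentially the $n\times n$ matrix with $-1$ on the diagonal of the leak compartment adjusted by $-c$, $1$'s on a subdiagonal, and a single entry in the corner, which is similar to a companion matrix of a polynomial of the form $\lambda^n + (\text{lower order})$.

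The second step is to choose $c$ so that this explicit polynomial is irreducible over $\qq$. After the reduction, $\det(\lambda I - A)$ specializes to something like $(\lambda+1)^{n} - $ (a corner term), or more simply $\lambda^n - (\text{const})\lambda^{k} - \cdots$; in any case it is a one-parameter family in $c$, and one can invoke an Eisenstein-type criterion or simply the fact that for a suitable integer or prime value of $c$ the polynomial is irreducible (for instance arranging the specialized polynomial to be $\lambda^n + \lambda^{n-1} + \cdots + \lambda + c$ with $c$ prime and applying a shift plus Eisenstein, or arranging it to be $\lambda^n - c$ with $c$ not a perfect power). The details here are elementary polynomial algebra, so I would not belabor them.

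The main obstacle is the reduction step: I need to be careful that the specialization of parameters I choose actually corresponds to a legal linear compartment model on the \emph{same} graph $G$ (we are only allowed to send parameters to generic/special \emph{values}, and sending an exchange rate to $0$ is fine as a specialization even though it ``deletes'' an edge) and that after the permutation similarity the matrix genuinely has companion form — strong connectivity is exactly what I need to guarantee the existence of the closed spanning walk, but I should make sure the walk can be chosen so that no vertex forces a conflicting diagonal entry, and that the leak placement keeps $\det(A)\neq 0$ as in Proposition~\ref{prop:detnonzero} so the constant term of the characteristic polynomial is nonzero. Once the companion-form specialization is in hand, irreducibility of the specialized polynomial plus the openness of the irreducibility locus finishes the argument.
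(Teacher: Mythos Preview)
Your specialization strategy is sound in principle --- if a monic polynomial in $\lambda$ with coefficients in $\qq[a_{ij}]$ becomes irreducible over $\qq$ after some specialization of the parameters, then it is irreducible over $\qq(a_{ij})$ --- but the specific specialization you propose does not work. The reduction to a companion matrix requires a Hamiltonian \emph{cycle} in $G$, i.e.\ a closed walk visiting every vertex exactly once. Strong connectivity guarantees a closed walk visiting every vertex, but with repetitions in general, and repetitions destroy the companion form after relabelling. Concretely, take the ``star'' graph on $\{1,2,3,4\}$ with edges $1\leftrightarrow 2$, $1\leftrightarrow 3$, $1\leftrightarrow 4$: it is strongly connected but has no Hamiltonian cycle, so no specialization of its edge weights yields a permutation-similar companion matrix. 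Worse, the most natural symmetric specialization (all rates equal to $1$) produces a matrix with $-1$ as an eigenvalue of multiplicity at least $2$, so the characteristic polynomial is visibly reducible there; you would need a genuinely different, graph-dependent choice of specialization, and it is not clear how to produce one uniformly.

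The paper's argument avoids specialization entirely. It isolates the leak parameter $a_{01}$ and writes $\det(\lambda I - A) = \mathrm{char}(\lambda,L(G)) - a_{01}\,\mathrm{char}(\lambda,L_{11}(G))$, where $L(G)$ is the (negated) Laplacian; any common factor of a putative factorization would have to divide both summands. Since $\mathrm{char}(\lambda,L(G))$ has $\lambda$ as a factor and its linear coefficient is the sum over spanning in-trees (Matrix--Tree), a Newton-polytope/degree argument on that tree sum shows $\mathrm{char}(\lambda,L(G))/\lambda$ is irreducible; combined with $\det(A)\neq 0$ (Proposition~\ref{prop:detnonzero}) this rules out any nontrivial common factor. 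So the paper exploits the combinatorics of the Laplacian rather than hunting for a good numerical specialization, and that is exactly what lets it handle non-Hamiltonian $G$.
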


\begin{proof} Without loss of generality, assume there is just one leak from the first compartment.  Let $|V|=n$ and let $B=\lambda{I}-A$.  Then $B$ can be written as $\lambda{I}-L(G)+A'$, where $L(G)$ is the negative of the Laplacian matrix of $G$ and $A'$ is the matrix with the $(1,1)$ entry equal to $-a_{01}$ and all other entries equal to zero.  Let the submatrix corresponding to the second through $n^{th}$ columns of $\lambda{I}-L(G)$ be denoted $L'$.  Then $\det(B)$ can be written as $\det(\lambda{I}-L(G))+\det(A'')$ where $A''$ has its first column equal to the first column of $A'$ and the second through $n^{th}$ columns equal to $L'$.  Let $char(\lambda,A)$ denote the characteristic polynomial of the matrix $A$.  Then $\det(B)$ can be simplified as $char(\lambda,L(G))-a_{01}char(\lambda,L_{11}(G))$, where $L_{11}(G)$ is the submatrix of $L(G)$ formed by removing the first row and first column of $L(G)$.  If $\det(B)$ were reducible, then it can be factored as $(f_1(A,\lambda)+a_{01}f_2(A,\lambda))g(A,\lambda)$ for some polynomials $f_1,f_2,g$ where $g(A,\lambda)$ divides both $char(\lambda,L(G))$ and $char(\lambda,L_{11}(G))$.  Note that $\lambda = 0$ is a root of $char(\lambda,L(G))$, so that the constant term of $char(\lambda,L(G))$ is zero.  We now show that the coefficient of $\lambda$ in $char(\lambda,L(G))$ is an irreducible polynomial, so that $char(\lambda,L(G))/\lambda$ is irreducible.  Since $\det(A)$ is nonzero by Proposition \ref{prop:detnonzero}, then $g(A,\lambda)$ cannot be $\lambda$, and thus $char(\lambda,L(G))/\lambda$ being irreducible implies that $g(A,\lambda)$ must be a constant.

The coefficient of $\lambda$ in $char(\lambda,L(G))$ can be expressed as the sum over all rooted intrees in the directed graph $G$ \cite[Lemma ~5.6.5]{Stanley}.  Since $G$ is strongly connected, then there is an intree with a rooting at each node in the graph.  Define the degree of $a_{ji}$ to be $e_i$, where $e_i$ denotes the $i^{th}$ standard unit vector.  Then we define the degree of a monomial to be the sum of the degrees of all of the parameters $a_{ji}$ in the monomial.  This means the degree for an intree is $1-e_i$, where $1$ is the ones vector and $i$ is the rooting vertex.  Thus, the set of all degrees of the monomials that occur in the coefficient of $\lambda$ in $char(\lambda,L(G))$ is $\{1-e_i : i=1,...,n\}$.  If this polynomial factored, then $\{1-e_i : i=1,...,n\}$ could be written as $A+B$ where $A+B=\{a+b : a \in A, b \in B\}$ for distinct degree sets $A$ and $B$.  However there is no such $A$ and $B$ since there is an intree with a rooting at each node, thus there exists no entry $j$ of $1-e_i$ that is equal to $1$ for every $i$.  Thus, the coefficient of $\lambda$ in $char(\lambda,L(G))$ is an irreducible polynomial, so that $char(\lambda,L(G))/\lambda$ is irreducible.

\end{proof}

Thus, Theorem \ref{thm:irred} gives us the following corollary on when the greatest common divisor $g_i$ from Theorem \ref{thm:ioeqn} is just one:

\begin{cor} \label{cor:relprime} Let $G$ be strongly connected.  Let $\partial$ be the differential operator $d/dt$ and let 
$A_{ji}$ be the submatrix of $\partial{I}-A$ obtained by deleting the $jth$ row and the $ith$ column
of $\partial{I}-A$.  If there is at least one leak, then the input-output equations are of the form:
$${\det(\partial{I}-A)}y_i=\sum_{j \in In} (-1)^{i + j} {\det(A_{ji})} u_j$$
\end{cor}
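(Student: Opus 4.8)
The plan is to feed Theorem~\ref{thm:irred} into Theorem~\ref{thm:ioeqn} and win by a degree count. By definition $g_i$ is a common divisor of $\det(\partial I-A)$ and the polynomials $\det(A_{ji})$, so in particular $g_i \mid \det(\partial I-A)$, where we view everything as polynomials in $\partial$ with coefficients in the field of rational functions in the parameters $a_{ij}$. By Theorem~\ref{thm:irred} the characteristic polynomial $\det(\partial I - A)$ is irreducible in this ring, hence its only divisors are units and scalar multiples of itself. Thus it suffices to rule out the possibility $g_i \doteq \det(\partial I - A)$: if I can exhibit a single $j\in In$ for which $\det(\partial I - A)$ does not divide $\det(A_{ji})$, then $g_i$ must be a unit, i.e.\ $g_i = 1$, and the claimed form of the input--output equation drops out of Theorem~\ref{thm:ioeqn}.

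Set $n=|V|$. First I would record the relevant $\partial$-degrees. The product of the diagonal entries of $\partial I - A$ contributes the monomial $\partial^n$, while every other permutation in the Leibniz expansion uses at least two off-diagonal entries, each of $\partial$-degree $0$; hence $\det(\partial I - A)$ is monic of degree exactly $n$ in $\partial$. On the other hand $A_{ji}$ is an $(n-1)\times(n-1)$ matrix in which only the surviving diagonal entries of $\partial I-A$ involve $\partial$ (there are at most $n-1$ of them), and each such entry has $\partial$-degree $1$; hence $\deg_{\partial}\det(A_{ji})\le n-1<n$. Consequently, as soon as $\det(A_{ji})$ is a \emph{nonzero} polynomial, it cannot be divisible by the degree-$n$ polynomial $\det(\partial I - A)$, and then $\gcd\bigl(\det(\partial I-A),\det(A_{ji})\bigr)=1$ by irreducibility, forcing $g_i=1$.

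It remains to produce one $j\in In$ with $\det(A_{ji})\not\equiv 0$ (the statement being vacuous if $In=\emptyset$). Fix any $j\in In$. Since $G$ is strongly connected there is a directed path $j=v_0\to v_1\to\cdots\to v_k=i$ in $G$, and I would use it to isolate one surviving term in $\det(A_{ji})$: the bijection from the row index set $V\setminus\{j\}$ to the column index set $V\setminus\{i\}$ that sends $v_t\mapsto v_{t-1}$ along the path and fixes every other vertex contributes the product $\prod_{t=1}^{k}(-a_{v_t v_{t-1}})\cdot\prod_{w}(\partial I-A)_{ww}$ (over the non-path vertices $w$), all of whose factors are nonzero; one checks that the monomial read off from its top $\partial$-degree part is not cancelled by any other permutation. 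Equivalently, $(-1)^{i+j}\det(A_{ji})$ is the numerator of the transfer function from $u_j$ to $x_i$, which is nonzero precisely because $i$ is reachable from $j$. The genuinely delicate step here is exactly this non-vanishing claim (showing the path term survives cancellation, or invoking the structured-systems fact that the $(i,j)$ entry of $\mathrm{adj}(\partial I - A)$ is nonzero iff $j$ reaches $i$); everything else — divisibility of $g_i$ into $\det(\partial I - A)$, irreducibility from Theorem~\ref{thm:irred}, and the two degree bounds — is routine.
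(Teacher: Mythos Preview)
Your proposal is correct and follows exactly the route the paper intends: the corollary is stated without proof as an immediate consequence of Theorem~\ref{thm:irred}, and your irreducibility-plus-degree argument is precisely the intended unpacking. You actually go further than the paper by explicitly addressing the non-vanishing of $\det(A_{ji})$ via a path in $G$, a point the paper leaves implicit here (and only revisits later in Proposition~\ref{prop:paths}).
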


\begin{ex}
The model in Figure \ref{fig:3comp3leaks} has the following input-output equation, where we let $a_{11}=-(a_{01}+a_{21}),a_{22}=-(a_{02}+a_{12}+a_{32}),a_{33}=-(a_{03}+a_{13})$:

\begin{align*}
y_1^{(3)}-E_{1}(a_{11},a_{22},a_{33})y_1^{(2)}  
+ ( E_{2}(a_{11},a_{22},a_{33})-a_{12}a_{21})y_1^{'} \\
-(E_{3}(a_{11},a_{22},a_{33})
-a_{12}a_{21}a_{33} +a_{21}a_{13}a_{32})y_1 \\ =  \quad \quad u_{1}^{(2)}-E_{1}(a_{22},a_{33})u_{1}^{'}
+E_{2}(a_{22},a_{33})u_{1}
\end{align*}
where $E_{k}(z_{1}, \ldots, z_{m})$ denotes the $k$-th elementary symmetric polynomial
in $z_{1}, \ldots, z_{m}$.
\end{ex}


\section{Identifiability of Input-Output Equations}\label{sec:ident}

A state space model with given inputs and outputs is said
to be \textit{generically structurally identifiable} if with a generic choice of the inputs
and initial conditions, the parameters of the model can be recovered 
from exact measurements of both the inputs and the outputs.
In this section we explain the precise mathematical content
of this notion.

\begin{defn}\label{defn:identify}
Let $(G, In, Out, Leak)$ be a linear compartment model and
let $c$ denote the vector of all coefficient functions of
all the linear input-output equations derived in Theorem
\ref{thm:ioeqn} for each $i \in Out$.  The function $c$ defines a map
$c:  \rr^{|E| + |Leak|}  \rightarrow \rr^{k}$,
where $k$ is the total number of coefficients.  The linear compartment model  $(G, In, Out, Leak)$ is: 
\begin{itemize}
	\item{\textit{globally identifiable} if $c$ is a one-to-one function, and is \textit{generically globally identifiable} if global identifiability holds everywhere in $\rr^{|E| + |Leak|}$, except possibly on a set of measure zero.}
	\item{\textit{locally identifiable} if around any neighborhood of a point in $\rr^{|E| + |Leak|}$, $c$ is a one-to-one function, and is \textit{generically locally identifiable} if local identifiability holds everywhere in $\rr^{|E| + |Leak|}$, except possibly on a set of measure zero.}
\item{\textit{unidentifiable} if $c$ is infinite-to-one.}
\end{itemize}
\end{defn}

\begin{ex}
The model in Figure \ref{fig:3comp3leaks} has $|E|+|Leak|=4+3=7$ independent parameters, but only $5$ coefficients, so the map $c$ from $7$ parameters to $5$ coefficients is infinite-to-one.  Thus the model is unidentifiable.
\end{ex}

It is a general fact from differential algebra that
it is possible to recover all the coefficients of the input-output
equations from the arbitrary data with perfect measurements \cite{Ljung}.
Hence, Definition \ref{defn:identify} gives the practical
essence of the definition of identifiability.  In other words, identifiability concerns whether it is possible to recover the parameters of a model given the dynamics from the input-output equations.  In the standard differential algebra approach of \cite{Saccomani2003}, one finds a \textit{characteristic set} of the system in Equation \ref{eq:main}, which contains possibly lower order input-output equations containing more than one output.  We note that finding an input-output equation in $y_i$ for each $i \in Out$ using Theorem \ref{thm:ioeqn} also gives a full description of the dynamics of the model, thus our identifiability results are consistent with the approach in \cite{Saccomani2003}. 
 

In this paper we focus almost exclusively on ``generic'' local identifiability and will use the following result to determine generic local identifiability.  

\begin{prop} \label{prop:jacobian}
The model $(G, In, Out, Leak)$ is generically locally
identifiable if and only if the rank of the Jacobian of $c$
is equal to $|E| + |Leak|$ when evaluated at a random point. 
\end{prop}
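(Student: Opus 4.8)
The plan is to prove Proposition \ref{prop:jacobian} by invoking the standard equivalence between generic local identifiability and the rank of the parameter-to-coefficient Jacobian, which in turn rests on the constant-rank theorem. First I would recall that, by the remark following Theorem \ref{thm:ioeqn} (justified via \cite{Ljung}), the input-output behavior of the model is faithfully captured by the vector $c \in \rr^k$ of coefficient functions of the input-output equations: two parameter vectors produce the same input-output data (for generic inputs and initial conditions) if and only if they produce the same point $c$. Hence generic local identifiability of the model is precisely generic local injectivity of the polynomial map $c : \rr^{|E|+|Leak|} \to \rr^k$.

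Next I would set $n = |E| + |Leak|$ and analyze the map $c$ using the fact that it is a polynomial (hence real-analytic) map. Let $r$ denote the maximal rank of the Jacobian $Dc$ over the domain; since the entries of $Dc$ are polynomials, the locus where $\mathrm{rank}(Dc) = r$ is the complement of a proper Zariski-closed set, hence of full measure, and this is exactly the ``random point'' in the statement. On this open dense set $U$ the Jacobian has constant rank $r$, so by the constant-rank theorem $c$ is locally equivalent, near each point of $U$, to the linear projection $(t_1,\dots,t_n) \mapsto (t_1,\dots,t_r,0,\dots,0)$. If $r = n$ this projection is a local diffeomorphism, so $c$ is locally injective at every point of $U$, giving generic local identifiability. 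Conversely, if $r < n$, then near any point of $U$ the fibers of $c$ contain an $(n-r)$-dimensional manifold, so $c$ is infinite-to-one in every neighborhood of every point of $U$; since $U$ has full measure, local injectivity fails outside a set of measure zero, so the model is not generically locally identifiable (indeed it is unidentifiable).

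I would close by reconciling the "evaluated at a random point" phrasing with the constant-rank argument: because $\mathrm{rank}(Dc)$ takes its generic value $r$ precisely off a measure-zero set, checking the rank at a single randomly chosen point almost surely returns $r$, and the proposition's criterion $r = n$ is therefore decidable by one such evaluation. The main obstacle I anticipate is not the differential-geometric core — that is routine — but making rigorous the first step, namely that coefficients of the input-output equations are exactly the right invariants: one must be careful that for \emph{generic} inputs and initial conditions all coefficients are recoverable from the data (no spurious cancellations or low-order degeneracies), which is where the cited differential-algebra fact \cite{Ljung} and the minimality discussion around Corollary \ref{cor:relprime} are doing real work. I would state this reduction cleanly as the opening sentence of the proof and then let the constant-rank theorem finish it.
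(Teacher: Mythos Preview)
Your proposal is correct and follows essentially the same approach as the paper. The paper's proof is a two-sentence sketch asserting that generic local identifiability is equivalent to the image of $c$ having full dimension, and that this dimension equals the generic Jacobian rank; your constant-rank theorem argument is precisely the standard way to make both of those assertions rigorous, so you have supplied the details the paper omits rather than taken a different route.
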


\begin{proof}
Since the coefficients in $c$ are all polynomial functions of the
parameters, the model $(G, In, Out, Leak)$ is generically locally
identifiable if and only if the image of $c$ has dimension equal to
the number of parameters, i.e.~$|E| + |Leak|$.  The dimension of the image of
a map is equal to the evaluation of the Jacobian at a generic point.
\end{proof}

Recent work from Baaijens \cite{Baaijens} shows that the criterion in Proposition \ref{prop:jacobian} can be translated to one based on the rank of the bi-adjacency matrix which is order $O(|V|^6)$ operations, compared to the method in Proposition \ref{prop:jacobian} which is $O(|V|^8)$ operations \cite{Baaijens}.

In previous work \cite{MeshkatSullivant}, the first and second authors
focused on a special case of linear compartment models, which we call here \textit{identifiable cycle}
models.

\begin{defn}
A linear compartment model $(G, In, Out, Leak)$
is called an \textit{identifiable cycle model} if
\begin{enumerate}
\item  $G$ is strongly connected,
\item  $In = Out  = \{1\}$,
\item  $Leak = V$,
\item  the dimension of the image of $c$ is $|E| + 1$.
\end{enumerate}
\end{defn}

\begin{rmk} For a linear compartment model satisfying conditions (1), (2), and (3) where $|E|=2|V|-2$, a sufficient condition to obtain condition (4) is that the graph $G$ be inductively strongly connected with respect to vertex $1$ \cite{MeshkatSullivant}.  Thus, whether a model is an identifiable cycle model can be checked without calculating $\dim {\rm image} \, c$ for inductively strongly connected graphs.  However, this condition is sufficient but not necessary, thus a graph $G$ that fails to be inductively strongly connected does not imply condition (4) fails as well.  We note that checking if a graph is inductively strongly connected has order $O(|V|^3)$ operations, but has the additional benefit that it can be done by visual inspection, as opposed to other methods (e.g. Laplace Transform \cite{Bellman}, Differential algebra method \cite{Ljung}) for checking identifiability.
\end{rmk}

As shown in \cite{MeshkatSullivant}, the largest dimension of the image of
the coefficient map $c$ for a linear compartment model 
satisfying conditions (1), (2), and (3) of the definition is $|E| +1$
which is less that $|E| + |Leak|$, so such a model is never identifiable.
However, identifiable cycle models have identifiable 
scaling reparametrizations in terms
of the monomial cycles that are identifiable in the model.  To
explain this in detail we need the definition of an identifiable function.

\begin{defn} \label{defn:idfunction}
Let $c$ be a function $c:\Theta\rightarrow{\rr^{m_{2}}}$, where
$\Theta \subseteq \rr^{m_{1}}$.  A function
$f : \Theta \rightarrow \rr$ is \textit{globally identifiable} from
$c$ if there exists a function $\Phi: \rr^{m_{2}} \rightarrow \rr$
such that $\Phi \circ c  = f$.  The function $f$ is
\textit{locally identifiable} if there is a finitely multivalued function
$\Phi: \rr^{m_{2}} \rightarrow \rr$
such that $\Phi \circ c  = f$.
\end{defn}

A function $f$ being \textit{generically} locally identifiable from a map $c$ can be phrased in terms of dimension calculations using the Jacobian of $c$.

\begin{prop}
Let $c:\Theta\rightarrow{\rr^{m_{2}}}$, where
$\Theta \subseteq \rr^{m_{1}}$ is a $m_{1}$ dimensional subset of $\rr^{m_{1}}$.
A function $f: \Theta \rightarrow \rr$ is generically locally identifiable from $c$ if
the vector $\nabla f$ is in the span of the rows of $J(c)$.
Equivalently, consider the map $(c,f):  \Theta \rightarrow \rr^{m_{2}+1}$.
Then $f$ is generically locally identifiable from $c$ if and only if
the dimension of the image of $(c,f)$ equals the dimension of the image
of $c$.
\end{prop}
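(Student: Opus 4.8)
The statement to prove is the Proposition characterizing generic local identifiability of a function $f$ from the coefficient map $c$ in terms of the Jacobian $J(c)$, together with the equivalent formulation in terms of the dimension of the image of $(c,f)$. The plan is to reduce both assertions to standard facts about the rank of a polynomial (or smooth) map, exactly as in the proof of Proposition \ref{prop:jacobian}. First I would recall the key principle used already in this section: for a polynomial map $g$ defined on an $m$-dimensional parameter set, the dimension of the image of $g$ equals the rank of the Jacobian $J(g)$ evaluated at a generic point, and generic local injectivity (finite fibers near a generic point) is equivalent to this rank being full, i.e.\ equal to $m$ when the source is $m$-dimensional, or more generally to the image having the same dimension as the source. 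Since $f$ is assumed polynomial in the parameters (as are the coordinates of $c$), the map $(c,f)$ is again polynomial, so this principle applies verbatim to it.

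For the second (``equivalently'') assertion, I would argue as follows. The map $(c,f)\colon \Theta \to \rr^{m_2+1}$ factors through $c$ precisely when $f$ is a function of $c$; local identifiability of $f$ from $c$ means that, generically, the value of $f$ is determined up to finitely many choices by the value of $c$. Consider the projection $\pi\colon \mathrm{image}(c,f) \to \mathrm{image}(c)$ forgetting the last coordinate. Then $f$ is generically locally identifiable from $c$ if and only if $\pi$ has generically finite fibers, which (since $\pi$ is a morphism of the constructible image sets) holds if and only if $\dim \mathrm{image}(c,f) = \dim \mathrm{image}(c)$. This gives the stated equivalence.

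For the first assertion, I would connect the dimension count to the gradient condition. We always have $\dim \mathrm{image}(c) \le \dim \mathrm{image}(c,f) \le \dim \mathrm{image}(c) + 1$, with equality on the left exactly in the identifiable case by the previous paragraph. At a generic point, $\dim \mathrm{image}(c) = \mathrm{rank}\, J(c)$ and $\dim \mathrm{image}(c,f) = \mathrm{rank}\begin{pmatrix} J(c) \\ \nabla f \end{pmatrix}$. These two ranks are equal if and only if the row $\nabla f$ lies in the row span of $J(c)$; hence the row-span condition is equivalent to generic local identifiability. (Strictly, the bare ``if'' in the statement is all that is claimed, and it follows immediately: if $\nabla f$ is in the row span of $J(c)$ at a generic point, the augmented rank does not increase, so the image of $(c,f)$ has the same dimension as the image of $c$, and $f$ is generically locally identifiable.)

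The only real subtlety — and the step I would be most careful about — is the passage from ``the gradient is in the row span at a generic point'' to a genuine local analytic statement: one must invoke that for polynomial (real-analytic) maps the generic rank controls the local fiber structure almost everywhere, so that the rank condition at one generic point propagates to an open dense set, and there the implicit/constant-rank function theorem yields that $f$ is locally a finitely multivalued function of $c$. This is the same appeal to algebraic/analytic genericity already made implicitly in Proposition \ref{prop:jacobian}, so I would simply cite that reasoning rather than redo it, and otherwise the proof is a short bookkeeping argument with ranks and dimensions.
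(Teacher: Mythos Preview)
Your proposal is correct and follows essentially the same approach as the paper, which gives only a two-line proof invoking Definition~\ref{defn:idfunction} together with the fact (already used for Proposition~\ref{prop:jacobian}) that the dimension of the image of a polynomial map equals the generic rank of its Jacobian. Your write-up simply unpacks these two ingredients explicitly---the projection/finite-fiber argument for the dimension equivalence and the augmented-Jacobian rank comparison for the $\nabla f$ condition---so it is a more detailed rendering of the same reasoning rather than a different route.
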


\begin{proof}
This follows from Definition \ref{defn:idfunction} and the fact that the dimension of the image of
a map is equal to the evaluation of the Jacobian at a generic point.
\end{proof}

For a model $(G, In, Out, Leak)$ where there is a leak in every
compartment (i.e. $Leak = V$), it can greatly simplify the representation
to use the fact that the diagonal entries of $A(G)$ are the
only places where the parameters $a_{0i}$ appear.
Since these are algebraically independent parameters,
we can introduce a new algebraically  independent 
parameter $a_{ii}$ for the diagonal entries (i.e.
we make the substitution $a_{ii}  =  -a_{0i}-\sum_{k: i \rightarrow k \in E}{a_{ki}}$)
to get generic parameter values along the diagonal.
Identifiability questions in such a model are equivalent
to identifiability questions in the model with this reparametrized
matrix.  In the case of an identifiable cycle model, the identifiable functions
are explicitly characterized in terms of the combinatorics of $G$.

\begin{defn} A \textit{closed path} in a directed graph $G$ is a sequence of 
vertices $i_{0},i_{1}, i_{2}, \ldots, i_{k}$ with $i_{k} = i_{0}$ and
such that $i_{j+1} \to i_{j}$ is an edge for all $j = 0, \ldots, k-1$.
A \textit{cycle} in $G$ is a closed path with no repeated vertices.
To a cycle $C = i_{0},i_{1}, i_{2}, \ldots, i_{k}$, we associate the
monomial $a^{C} = a_{i_{1}i_{2}}a_{i_{2}i_{3}}\cdots a_{i_{k}i_{1}}$,
which we refer to as a \textit{monomial cycle}.  If a monomial cycle $a^{C}$ has
length $k$, we refer to it as a $k$-cycle.  
\end{defn}

Note that we also include the monomial cycles $a_{ii}$ which
are $1$-cycles, or \textit{self-cycles}.  We now state our main result from \cite{MeshkatSullivant}:

\begin{thm}\label{thm:oldstandardmodels}
Let $(G,\{1\}, \{1\}, V)$ be an identifiable cycle model.  Then every monomial
cycle in $G$ is identifiable and every identifiable function is
a function of the monomial cycles.  Equivalently, there exists an identifiable scaling reparametrization in terms of monomial functions of the original parameters.
\end{thm}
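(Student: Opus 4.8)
The plan is to show that the coefficient map $c$ factors through the map recording all monomial cycle values, to compute the image dimension of the latter, and then to use condition (4) to force the two maps to have equal image dimension, from which local identifiability of the monomial cycles follows by a soft argument.

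First I would use the hypothesis $Leak = V$ to pass to the reparametrization described above, in which the $|E|$ edge parameters $a_{ij}$ and the $n = |V|$ diagonal parameters $a_{ii}$ are algebraically independent. Since $G$ is strongly connected and has a leak, Corollary \ref{cor:relprime} applies, and the single input-output equation is $\det(\partial I - A)\,y_1 = \det(A_{11})\,u_1$; thus $c$ is the list of coefficients (as polynomials in $\partial$) of $\det(\lambda I - A)$ together with those of $\det(A_{11})$, the characteristic polynomial of the principal submatrix of $A$ on $\{2,\dots,n\}$. Expanding each principal minor of $A$ over permutations and decomposing each permutation into disjoint cycles, every such minor is a signed sum of products of disjoint monomial cycles $a^{C}$, with the diagonal entries contributing the self-cycles $a_{ii}$. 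Hence every entry of $c$ is a polynomial in the monomial cycles: writing $M$ for the map sending the parameters to the vector of all monomial cycle values, we get a factorization $c = \Psi \circ M$ with $\Psi$ polynomial. This already yields the second assertion, since any identifiable $f = \Phi \circ c$ equals $(\Phi\circ\Psi)\circ M$, a function of the monomial cycles.

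Next I would compute $\dim\,\mathrm{image}(M)$ via a torus action. Consider $A \mapsto D A D^{-1}$ for diagonal $D = \mathrm{diag}(\lambda_1,\dots,\lambda_n)$, which sends $a_{ij}\mapsto (\lambda_i/\lambda_j)\,a_{ij}$ and fixes each $a_{ii}$. Every monomial cycle is invariant, since each vertex contributes its $\lambda$ once to the numerator and once to the denominator, so $M$, and therefore $c$, is constant on orbits. A monomial $\prod a_{ij}^{m_{ij}}\prod a_{ii}^{n_i}$ is invariant exactly when the exponent vector $m$ is a nonnegative integer circulation on $G$; by the standard decomposition of circulations into directed cycles, the invariant monomials are precisely the products of monomial cycles, so the monomial cycles generate the entire ring of invariants and hence generically separate orbits. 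Because $G$ is connected, the generic stabilizer is the one-dimensional subgroup of scalar matrices, so generic orbits have dimension $n-1$; the generic fibers of $M$ are these orbits, giving $\dim\,\mathrm{image}(M) = (|E|+n) - (n-1) = |E|+1$.

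Finally I would combine these facts. The factorization $c = \Psi\circ M$ forces $\dim\,\mathrm{image}(c) \le \dim\,\mathrm{image}(M) = |E|+1$ in general, and condition (4) makes this an equality. Then $\Psi$ is a dominant map between irreducible varieties of the same dimension, hence generically finite, so $M$ is recoverable up to finitely many values from $c$; that is, every monomial cycle is generically locally identifiable, which is the first assertion. For the scaling reparametrization, I would fix a spanning tree of $G$ and use its $n-1$ edge parameters as free gauge coordinates along the torus directions, while the $n-1$ fundamental-cycle monomials (one per non-tree edge) together with the $n$ self-cycles $a_{ii}$ supply $|E|+1$ algebraically independent, identifiable monomial functions on which the model depends; this is the desired identifiable scaling reparametrization in monomial functions. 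The main obstacle is the dimension computation for $M$: one must verify both that the monomial cycles generate the full invariant ring (the circulation-to-cycles decomposition, where strong connectivity guarantees every edge lies on a cycle so that no edge direction is lost) and that this forces generic separation of orbits, so that $\dim\,\mathrm{image}(M)$ is exactly $|E|+1$ and not smaller. Everything else is either the routine determinant bookkeeping of the first step or the soft observation that equal image dimensions under a factorization force generic finiteness.
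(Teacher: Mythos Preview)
The paper does not actually prove Theorem~\ref{thm:oldstandardmodels}; it is quoted verbatim as ``our main result from \cite{MeshkatSullivant}'' and used as a black box for the rest of the paper. So there is no in-paper argument to compare your proposal against.

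That said, your outline is essentially a correct proof and is in the spirit of the argument in \cite{MeshkatSullivant}. The factorization of $c$ through the cycle map via the disjoint-cycle expansion of principal minors, the torus action $A\mapsto DAD^{-1}$ identifying the invariant ring with the algebra generated by monomial cycles, and the orbit-dimension count $(|E|+n)-(n-1)=|E|+1$ are exactly the right ingredients, and your deduction of both assertions from the equality $\dim\operatorname{image}c=\dim\operatorname{image}M$ is clean. Two small slips in the last paragraph are worth flagging. First, the number of fundamental cycles is $|E|-(n-1)$, not $n-1$; your final total $|E|+1$ is correct, so this is just a typo. Second, for a \emph{directed} graph an undirected spanning tree need not yield directed fundamental cycles, so the new off-diagonal parameters after normalization may be Laurent monomials rather than honest monomial cycles; to get the reparametrization literally in monomial cycles you should normalize along a spanning out-arborescence rooted at vertex~$1$ (which exists by strong connectivity), as in the paper's worked example, or else invoke Proposition~\ref{prop:kernelincidence} to rewrite in a basis of directed cycles. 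Neither point affects the identifiability conclusions.
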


\begin{ex}
The model in Figure \ref{fig:3comp3leaks} is an identifiable cycle model, thus the monomial cycles $a_{11},a_{22},a_{33},a_{12}a_{21},a_{21}a_{13}a_{32}$ are all identifiable.  Using the scaling $X_1=x_1$, $X_2=x_{2}/a_{21}$, and $X_3=x_{3}/a_{21}a_{32}$, we get the following identifiable scaling reparametrization:

$$
\begin{pmatrix} 
\dot{X}_1 \\
\dot{X}_2 \\
\dot{X}_3 \end{pmatrix} = {\begin{pmatrix} 
a_{11} & a_{12}a_{21} & a_{21}a_{13}a_{32} \\
1 & a_{22} & 0 \\
0 & 1 & a_{33} 
\end{pmatrix}} {\begin{pmatrix}
X_1 \\
X_2 \\
X_3 \end{pmatrix} } + {\begin{pmatrix}
u_1 \\
0 \\
0 \end{pmatrix}}
$$
$$ y_1=X_1.$$
\end{ex}

In \cite{MeshkatSullivant}, the goal was to determine identifiable scaling reparametrizations over the identifiable monomial cycles in the graph $G$.  In this work, our main goal is to try to determine sufficient conditions on the position of inputs, outputs, and leaks to obtain identifiability, starting with an identifiable cycle model.  We will need the following definitions and results from \cite{MeshkatSullivant} in order to do this.

We define the $|V|$ by $|E|$ \textit{incidence matrix} $M(G)$ as:

\begin{equation}\label{eq:eg}
  M(G)_{i,(j,k)} = \left\{ 
  \begin{array}{l l l}
    1 & \quad \text{if $i=j$}\\
   -1 & \quad \text{if $i=k$}\\
    0 & \quad \text{otherwise.}\\
  \end{array} \right.
\end{equation}
In other words, $M(G)$ has column vectors corresponding to the edges 
$j\rightarrow{k} \in E$ with a $1$ in the $jth$ row, $-1$ in the $kth$ row, and $0$ 
otherwise.  We define the \textit{indicator vector} of a cycle $C$ as the vector $(x_s)_{s \in E}$ such that $x_s = 1$ if $s \in E_C$ and $x_s = 0$ if $s \notin E_C$, where $E_C$ is the set of edges associated to the cycle $C$.  We state one final result from \cite{MeshkatSullivant} to be used in later sections:

\begin{prop} \label{prop:kernelincidence} Let $G$ be a strongly connected graph.  Then a set of $|E|-|V|+1$ linearly independent indicator vectors of cycles form a basis for the kernel of $M(G)$.  
\end{prop}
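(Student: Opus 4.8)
The plan is to exhibit the required number of cycle indicator vectors inside $\ker M(G)$ and then argue that linear independence plus a dimension count forces them to be a basis. First I would record the standard fact that $\operatorname{rank} M(G) = |V| - 1$ when $G$ is strongly connected: the all-ones vector (on vertices) is in the left kernel since every column of $M(G)$ sums to zero, so $\operatorname{rank} M(G) \le |V|-1$; and equality holds because the underlying undirected graph is connected, so the incidence matrix of any spanning tree already has rank $|V|-1$. By rank-nullity, $\dim \ker M(G) = |E| - |V| + 1$. Thus it suffices to prove two things: (a) the indicator vector of every cycle $C$ lies in $\ker M(G)$, and (b) there exist $|E| - |V| + 1$ linearly independent such indicator vectors.

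For (a), fix a cycle $C = i_0, i_1, \ldots, i_k$ with $i_k = i_0$ and let $\chi_C = (x_s)_{s \in E}$ be its indicator vector. Computing $M(G)\chi_C$ coordinatewise, the $v$-th entry is $\sum_{s \in E_C} M(G)_{v,s}$, which counts (with sign) the edges of $C$ incident to $v$: each edge $i_{j+1} \to i_j$ of $C$ contributes $+1$ to its tail row $i_{j+1}$ and $-1$ to its head row $i_j$. Since $C$ is a closed path, every vertex appearing in $C$ is the head of exactly one edge of $C$ and the tail of exactly one edge of $C$, so these contributions cancel and $M(G)\chi_C = 0$. (Here it is important that $C$ has no repeated vertices so that "exactly one" is literally correct; a repeated vertex would still cancel but the bookkeeping is cleaner for cycles.)

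For (b), I would use a spanning-tree argument. Pick a spanning tree $T$ of the underlying undirected graph of $G$; there are exactly $|E| - |V| + 1$ edges of $G$ not in $T$, say $e_1, \ldots, e_{|E|-|V|+1}$. Strong connectivity guarantees that for each non-tree edge $e_\ell = u \to w$ there is a directed path in $G$ from $w$ back to $u$, and together with $e_\ell$ this yields a directed closed path through $e_\ell$; after deleting repeated vertices one extracts from it an actual cycle $C_\ell$ containing the edge $e_\ell$. The key point is that $e_\ell$ can be chosen to appear in $C_\ell$ (it is the unique non-tree edge we are forcing in, though $C_\ell$ may use other non-tree edges too). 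To get genuine linear independence I would instead build the $C_\ell$ more carefully: route the return path from $w$ to $u$ using only tree edges of $T$ is not possible in a directed graph in general, so the cleanest route is to argue independence directly. Consider the matrix whose rows are the $\chi_{C_\ell}$; restrict attention to the columns indexed by the non-tree edges $e_1, \ldots, e_{|E|-|V|+1}$. If the $C_\ell$ are chosen so that $C_\ell$ contains $e_\ell$, this submatrix has $1$'s on the diagonal; after reordering the $C_\ell$ appropriately it becomes upper (or lower) triangular with unit diagonal, hence invertible, so the $\chi_{C_\ell}$ are linearly independent.

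The main obstacle is precisely this last triangularity claim: in a directed graph one cannot freely choose the return path, so a chosen cycle $C_\ell$ forced to contain $e_\ell$ may also contain $e_{\ell'}$ for $\ell' \ne \ell$, destroying the clean diagonal structure. I would handle this by processing the non-tree edges in a fixed order $e_1, \ldots, e_r$ and choosing $C_\ell$ to contain $e_\ell$ but no $e_m$ with $m < \ell$ — achievable by contracting $T$ and the already-used non-tree edges $e_1, \ldots, e_{\ell-1}$ and finding a cycle through $e_\ell$ in the resulting (still strongly connected, since we only contract) multigraph, then lifting. This yields a matrix that is lower-triangular with unit diagonal in the non-tree columns, which is invertible. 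Combined with the dimension count $\dim \ker M(G) = |E|-|V|+1$ from the first paragraph, the $r = |E|-|V|+1$ independent indicator vectors $\chi_{C_1}, \ldots, \chi_{C_r}$ form a basis, and conversely any $|E|-|V|+1$ linearly independent indicator vectors of cycles, all lying in the $(|E|-|V|+1)$-dimensional space $\ker M(G)$, automatically span it. This proves the proposition.
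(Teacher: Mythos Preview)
The paper does not prove this proposition; it is quoted from \cite{MeshkatSullivant}, so there is no in-paper argument to compare against.

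For the proposition as literally stated, your proof is correct and complete: the rank computation $\operatorname{rank} M(G)=|V|-1$ together with part (a) shows that cycle indicators live in the $(|E|-|V|+1)$-dimensional kernel, so any $|E|-|V|+1$ independent ones among them automatically form a basis. Your last sentence records exactly this.

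Your part (b), the explicit construction of $|E|-|V|+1$ independent directed cycles, is not required by the statement but does have a genuine gap. After contracting $T\cup\{e_1,\ldots,e_{\ell-1}\}$ the graph collapses to a single vertex with self-loops $e_\ell,\ldots,e_r$, so the only cycle through $e_\ell$ there is $\{e_\ell\}$ itself; lifting it back to $G$ means supplying a \emph{directed} walk from the head of $e_\ell$ to its tail inside $T\cup\{e_1,\ldots,e_{\ell-1}\}$. Since $T$ is merely a spanning tree of the underlying \emph{undirected} graph, that directed walk may be forced to use some $e_m$ with $m<\ell$, and the promised triangular structure collapses. Concretely, take $V=\{1,2,3\}$ with edges $a\colon 1\to2$, $b\colon 2\to1$, $c\colon 2\to3$, $d\colon 3\to2$, $e\colon 1\to3$, tree $T=\{a,c\}$, and process the non-tree edges in the order $(b,d,e)$: the unique directed cycle through $e$ is $e\,d\,b$, which uses both earlier non-tree edges. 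A clean repair is to replace the spanning-tree device with an ear decomposition of the strongly connected digraph: the initial cycle and each subsequent ear contribute edges absent from all earlier ears, so the associated $|E|-|V|+1$ cycles are independent via a genuinely lower-triangular submatrix in those distinguished edges.
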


It follows that there are $|E|-|V|+1+|V|=|E|+1$ independent monomial cycles and self-cycles corresponding to a model $(G, In, Out, V)$ where $G$ is strongly connected \cite{MeshkatSullivant}.


\section{Obtaining identifiability by removing leaks}\label{sec:leaks}

The main idea in this section is that, starting with an identifiable cycle model, 
removing exactly $|V|-1$ leaks from all compartments except one gives local identifiability of the model.  
More precisely, removing $|V|-1$ leaks has the effect of decreasing the number 
of parameters from $|V|+|E|$ down to $|V|+|E|-(|V|-1)=|E|+1$, while preserving the 
dimension of the image of $c$ as $|E| + 1 = |E| + |Leak|$.

We now re-state our main theorem, Theorem \ref{thm:theorem1}, 
and spend the rest of the section proving it.

\begin{thm1*} Let $(G, \{1\}, \{1\}, V)$ represent an identifiable cycle model. 
Then the corresponding model with a leak in a single compartment
 $(G, \{1\}, \{1\}, \{k\})$ is  generically locally identifiable.
\end{thm1*}

To prove Theorem \ref{thm:theorem1}, we will make use of Theorem \ref{thm:oldstandardmodels},
and the particular structure of the coefficient map $c$ that arises when
considering an identifiable cycle model.
In particular, for an identifiable cycle model we have the coefficient map
$c: \rr^{|V| + |E|}  \rightarrow \rr^{2|V| -1}$.
Let $f :  \rr^{|V| + |E|} \rightarrow \rr^{|E| + 1}$ be the \textit{cycle map}
that is, $f(A)  =  (a^{C} : C \mbox{ is a cycle of } G )$.
Then Theorem \ref{thm:oldstandardmodels} tells us that $c$ factors through
$f$, without a loss of dimension.  That is, there exists
a map $\Psi :  \rr^{|E| + 1} \rightarrow \rr^{2|V| -1}$
such that we have the following commutative diagram
$$\xymatrix{
\rr^{|V| + |E|}  \ar[r]^-{\displaystyle c} \ar@/_/[rd]^{\displaystyle f} & 
**[r] \rr^{2|V|-1}  \\
& **[r] \rr^{|E| + 1} \ar[u]_{\displaystyle \Psi}
}
$$
with the property that $c = \Psi \circ f$ and $\dim {\rm image} \, c = \dim {\rm image} \, f$.

Passing from a model $(G, \{1\}, \{1\}, V)$ to a model $(G, \{1\}, \{1\}, \{k\} )$
amounts to restricting the parameter space $\rr^{|V| + |E|}$
to a linear subspace $\Omega \subseteq  \rr^{|V| + |E|}$ of dimension $|E| + 1$
and we would like the image of $\Omega$ under the coefficient map $c$
to have dimension $|E| + 1$.  Since  $c$ factors through $f$ it suffices to prove
that the image of $\Omega$ under $f$ has dimension $|E| + 1$.

\begin{lemma}\label{lem:sliceparam}
Let $G = (V,E)$ be a directed graph with corresponding identifiable cycle model $(G, \{1\}, \{1\}, V )$.  Consider a model $(G, \{1\}, \{1\}, \{k\})$.  Let $\calc$ be a set of
cycles in $G$ that are linearly independent and span the cycle space.
Let
$f :  \rr^{|V| + |E|}  \rightarrow \rr^{|E| + 1}  :  A  \mapsto  (a^C)_{C \in \mathcal{C}}$
be the cycle map.  Let $\Omega \subseteq \rr^{|V| + |E|}$ be the linear space satisfying
$$
\Omega = \{ A \in \rr^{|V| + |E|}: a_{ii} =  - \sum_{j,j \neq i} a_{ji} \mbox{ for all }
i \neq k \}.
$$
Then the dimension of the image of $\Omega$ under the map $f$ is $|E| + 1$.
\end{lemma}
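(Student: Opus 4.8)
The plan is to show that the Jacobian of $f$ restricted to $\Omega$ has rank $|E|+1$ at a generic point; since $\dim\Omega=|E|+1$, this is precisely the assertion $\dim f(\Omega)=|E|+1$. First I would fix coordinates on $\Omega$: after deleting the leaks $a_{0i}$ for $i\neq k$ (and using the reparametrized diagonal), $\Omega$ is parametrized by the $|E|$ edge parameters $a_{ji}$ together with the single diagonal parameter $a_{kk}$, with $a_{ii}=-\sum_{j:\,i\to j\in E}a_{ji}$ for $i\neq k$. The hypothesis on $\calc$ forces it to consist of all $|V|$ self-cycles together with $|E|-|V|+1$ longer cycles whose edge-indicator vectors form a basis of $\ker M(G)$ (Proposition~\ref{prop:kernelincidence}). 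Ordering the output coordinates as (i)~the longer cycles, (ii)~the self-cycle $a_{kk}$, (iii)~the self-cycles $a_{ii}$ with $i\neq k$, and the input coordinates as $a_{kk}$ followed by the edges, the Jacobian of $f|_\Omega$ acquires the block form
$$
J=\begin{pmatrix} 0 & P\\ 1 & 0\\ 0 & Q\end{pmatrix},
$$
where $P_{C,e}=a^{C}/a_{e}$ if the edge $e$ lies on the cycle $C$ (and $0$ otherwise), and $Q_{i,e}=-1$ if $i$ is the tail of $e$ (and $0$ otherwise). Expanding $\det J$ along the $a_{kk}$-column gives $\det J=\pm\det\binom{P}{Q}$, so the whole problem reduces to showing that the $|E|\times|E|$ matrix $\binom{P}{Q}$ is invertible for a generic choice of the $a_{ji}$.

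Next I would treat $P$ and $Q$ separately. Distinct edges have distinct tails, so the rows of $Q$ have pairwise disjoint supports; hence $\operatorname{rank}Q=|V|-1$ and $\operatorname{rowspace}Q=\{v:\ v_{e}=c_{\operatorname{tail}(e)}\text{ for some }c\in\rr^{V}\text{ with }c_{k}=0\}$. On the other side, $P=\operatorname{diag}(1/a_{e})\cdot R$ where the rows of $R$ are the vectors $a^{C}\chi_{C}$ ($\chi_{C}$ the edge-indicator of $C$), so $\operatorname{rowspace}P=\operatorname{diag}(1/a_{e})\ker M(G)$ and $\operatorname{rank}P=|E|-|V|+1$ for generic parameters (the $\chi_{C}$ being a basis of $\ker M(G)$). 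Because $(|E|-|V|+1)+(|V|-1)=|E|$, invertibility of $\binom{P}{Q}$ is equivalent to $\operatorname{rowspace}P\cap\operatorname{rowspace}Q=\{0\}$; multiplying through by the invertible matrix $\operatorname{diag}(a_{e})$, this becomes: the only $u\in\ker M(G)$ of the form $u_{e}=a_{e}\,c_{\operatorname{tail}(e)}$ with $c_{k}=0$ is $u=0$.

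The crux is this last vanishing statement. Writing out $M(G)u=0$ (flow conservation at each vertex) with $u_{e}=a_{e}c_{\operatorname{tail}(e)}$ collapses to the linear system $(\operatorname{diag}(S)-\tilde B)c=0$, where $S_{v}$ is the out-strength of $v$ (sum of the parameters of the edges leaving $v$) and $\tilde B$ is the weighted adjacency matrix of $G$. A direct check shows the columns of $\operatorname{diag}(S)-\tilde B$ sum to zero, so its rank is at most $|V|-1$; and since it is the transpose of the out-degree weighted Laplacian of $G$, the matrix--tree theorem identifies its $v$-th principal cofactor with the polynomial $\tau_{v}$ enumerating spanning trees of $G$ oriented toward $v$. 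Because $G$ is strongly connected, such a tree exists for every $v$, so each $\tau_{v}$ is a nonzero polynomial in the $a_{ji}$; hence for generic parameters $\operatorname{diag}(S)-\tilde B$ has rank exactly $|V|-1$ and its kernel is spanned by $c^{*}=(\tau_{v})_{v\in V}$, whose $k$-th coordinate $\tau_{k}$ is nonzero. Therefore no nonzero kernel vector satisfies $c_{k}=0$, the intersection $\operatorname{rowspace}P\cap\operatorname{rowspace}Q$ is trivial, $\binom{P}{Q}$ is generically invertible, and $\dim f(\Omega)=|E|+1$.

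The step I expect to be the main obstacle is making the reductions of the second and third paragraphs airtight while keeping the word ``generic'' honest: invertibility of $\binom{P}{Q}$ should be exhibited as the non-vanishing on $\Omega$ of one explicit polynomial (a maximal minor) in the $a_{ji}$, so that ``$\operatorname{rank}P=|E|-|V|+1$'', ``$\operatorname{rank}(\operatorname{diag}(S)-\tilde B)=|V|-1$'', and ``$\tau_{k}\neq0$'' can all be imposed at once. The matrix--tree bookkeeping is standard but must be done with care (in-tree versus out-tree orientation, and which transpose of which Laplacian appears), and the identification $\operatorname{rowspace}P=\operatorname{diag}(1/a_{e})\ker M(G)$ is exactly where Proposition~\ref{prop:kernelincidence} and the structure of $\calc$ are used; the one-dimensionality of $\ker(\operatorname{diag}(S)-\tilde B)$ for generic parameters is what makes the constraint $c_{k}=0$ bite and produce the trivial intersection.
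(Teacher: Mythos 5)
Your proposal is correct, and it follows the same skeleton as the paper's proof: restrict to $\Omega$, peel off the row and column belonging to $a_{kk}$, and reduce everything to showing that the span of the gradients of the long cycles and the span of the gradients of the non-leak diagonal entries meet only in the origin. Where you genuinely diverge is in how that trivial intersection is established. The paper specializes to the point where every edge parameter equals $1$, identifies the first row space with the cycle space of $G$ (weightings with indegree equal to outdegree at every vertex), and then runs a rather terse induction "backwards along paths into $k$" to show that no nonzero combination of the rows $b_i$, $i\neq k$, can be such a flow. You instead stay at a generic point, rescale by $\operatorname{diag}(a_e)$ so that the intersection condition becomes "the only flow of the form $u_e=a_e c_{\operatorname{tail}(e)}$ with $c_k=0$ is zero," and recognize the resulting system as the kernel of the weighted out-degree Laplacian, which the all-minors matrix--tree theorem identifies as the span of the spanning-in-tree polynomial vector $(\tau_v)_v$; strong connectivity makes every $\tau_v$ a nonzero (indeed positive-coefficient) polynomial, so $c_k=0$ forces $c=0$. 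What this buys you is a single explicit non-vanishing polynomial certifying generic full rank, and a key step that is airtight where the paper's induction is compressed (conservation at $k$ alone only gives that the \emph{sum} of the coefficients on edges into $k$ vanishes; the Laplacian-kernel argument supplies the missing global reasoning). The cost is invoking the matrix--tree machinery, but the paper already leans on the in-tree expansion in the proof of Theorem~\ref{thm:irred}, and your use of Proposition~\ref{prop:kernelincidence} to identify the row space of $P$ is exactly the paper's. The only blemishes are cosmetic: the scaling should act on columns, i.e.\ $P=R\cdot\operatorname{diag}(1/a_e)$ rather than $\operatorname{diag}(1/a_e)\cdot R$ for row vectors, and one should say explicitly that each vertex has an outgoing edge (from strong connectivity) so that every row of $Q$ is nonzero; neither affects the argument.
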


\begin{proof}
First of all, since $\Omega$ is a linear space, we can replace $f$ by the natural map  
from $\rr^{|E| + 1}  \rightarrow \rr^{|E| + 1}$.  We also call this map $f$.
To show that the dimension is correct, we compute the Jacobian of $f$
and show that it has full rank.

Clearly, the row corresponding to the cycle $a_{kk}$ is linearly independent 
of the rest of the rows, so we focus on the $|E|$ by $|E|$ submatrix 
ignoring that row and column, and call this matrix $J$.  
Arrange the matrix so that the first $|E|-|V|+1$ rows correspond to the 
(non-diagonal) cycles of $G$ and the last $|V|-1$ rows correspond to the 
non-leak diagonal elements.  Let the first $|E|-|V|+1$ rows be called 
$A$ and the last $|V|-1$ rows be called $B$.  Clearly the rows of 
$A$ are linearly independent (by assumption).  The rows of $B$ are 
linearly independent since they are in triangular form (each involves 
distinct parameters).  We want to show that the full set of 
$|E|$ rows are linearly independent.  To do this, we show that 
the row space of $A$ and the row
space of $B$ intersect only in the origin.

To prove that $J$ generically has the maximal possible rank, it suffices to find
some point where the evaluation of $J$ at this point yields
the maximal possible rank.  We choose the point where we
set all the edge parameters $a_{ij} = 1$ for all $ j \to i \in E$.
This has the following effect on the problem
of comparing the row space of $A$ and the row space of $B$:
the row space of $A$ is exactly the cycle space of the graph $G$,
that is it consists of all weightings on the edges of the graph
where the indegree equals the outdegree of every vertex. On the other hand,  the matrix
$B$ is a $(|V|-1) \times |E|$ matrix.  The rows correspond to the vertices 
in $V \setminus \{k \}$, and the (negated) row corresponding to vertex $i$
has a one for an edge $i' \to j'$ if and only if $i = i'$, with all other entries zero.  

Since $A$ spans the cycle space of $G$, each element in the row space of
$A$ corresponds to a weighting on the edges of $G$ where the total
weight of all incoming edges at a vertex $i$ equals the total weight of all
outgoing edges at vertex $i$.  On the other hand, the only vector
in the row span of $B$ with the same property is the zero vector.
To see this, let $b_{i}$ be the row vector of $B$ associated to vertex
$i$.  Now a vector in the row span of $B$ will have zero weight 
on any of the outgoing edges of vertex $k$.  Hence, we could not include
a nonzero multiple of any $b_{i}$ with an edge pointing into vertex $k$.
By similar reasoning, this precludes the inclusion of any $b_{j}$ such
that $j \to i \to k$ in the graph.  By induction, and the fact that
$G$ is strongly connected, we must have weight zero on each of the
vectors $b_{j}$.
\end{proof}

\begin{proof}[Proof of Theorem \ref{thm:theorem1}]
By Lemma \ref{lem:sliceparam} and the comments preceding it
we know that the image of the restricted parameter space
under the cycle map $f$ has dimension $|E| + 1$, which 
is equal to the dimension of the image of the full parameter
space under the cycle map.  Since, for an identifiable cycle model, the
dimension of the image of the coefficient map $c$ is $|E| + 1$,
this must be the same for the restricted model.  But the model
has $|E| + 1$ parameters, hence it is generically locally identifiable.
\end{proof} 

We now demonstrate Theorem \ref{thm:theorem1} on a three compartment model with input, output, and leak all from the first compartment.

\begin{ex} \label{ex:mainex} 
Our ODE system has the following form:

$$
\begin{pmatrix} 
\dot{x}_1 \\
\dot{x}_2 \\
\dot{x}_3 \end{pmatrix} = {\begin{pmatrix} 
-(a_{01}+a_{21}) & a_{12} & a_{13} \\
a_{21} & -(a_{12}+a_{32}) & 0 \\
0 & a_{32} & -a_{13} 
\end{pmatrix}} {\begin{pmatrix}
x_1 \\
x_2 \\
x_3 \end{pmatrix} } + {\begin{pmatrix}
u_1 \\
0 \\
0 \end{pmatrix}}
$$
$$ y_1=x_1.$$

\begin{figure}
\begin{center}
\resizebox{!}{3cm}{
\includegraphics{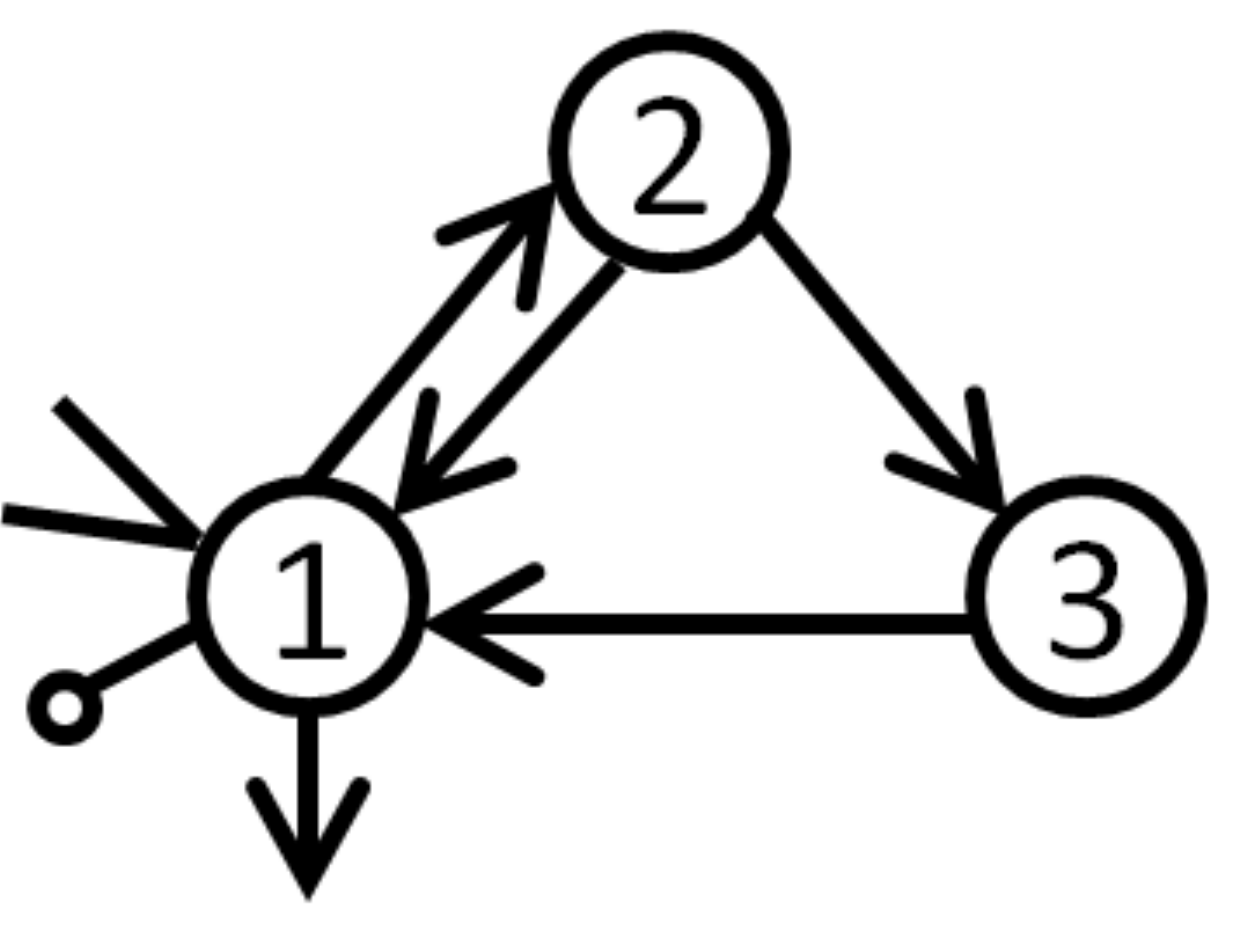}}
\end{center}\caption{A 3-compartment model}
\label{fig:3comp1leak}
\end{figure}

The compartment model drawing is displayed in Figure \ref{fig:3comp1leak}.  Note that this model can be obtained from the model in Figure \ref{fig:3comp3leaks} by removing all the leaks except the one in the first compartment.  Since the model in Figure \ref{fig:3comp3leaks} corresponds to an identifiable cycle model, then our model in Figure \ref{fig:3comp1leak} is generically locally identifiable by Theorem \ref{thm:theorem1}.  
\end{ex}


\section{Obtaining identifiability with multiple inputs and outputs}\label{sec:generalize}

In this section, we provide a complimentary result to  Theorem \ref{thm:theorem1},
that obtains identifiable models from identifiable cycle models using a combination
of removing leaks and adding inputs and outputs.
As for the proof of Theorem \ref{thm:theorem1}, the
strategy will be to gain a combinatorial understanding
of how the Jacobian of the model changes with these modifications
and use that to deduce identifiability.
The main theorem of this section is the following:

\begin{thm} \label{thm:theorem2}
Let $(G, \{1\}, \{1\}, V)$ represent an identifiable cycle model.
Let $In$, $Out$ and $Leak$ be three subsets of $V$ such
that $1 \in In$, $1 \in Out$ and $Leak \subseteq In \cup Out$.
Then $(G, In, Out, Leak)$ is generically locally identifiable. 
\end{thm}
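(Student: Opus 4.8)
The plan is to use Proposition~\ref{prop:jacobian}: it suffices to show that the Jacobian of the coefficient map $c$ of $(G, In, Out, Leak)$ has rank $|E| + |Leak|$ at a generic point. I would work throughout in the parametrization of Section~\ref{sec:leaks}, in which the diagonal entry $a_{\ell\ell}$ of $A(G)$ is a free parameter for each $\ell \in Leak$ while $a_{jj} = -\sum_{k:\, j \to k \in E} a_{kj}$ for $j \notin Leak$; the parameter space is then an affine slice of dimension $|E| + |Leak|$ inside the ``all diagonals free'' matrix space, and by Corollary~\ref{cor:relprime} (assuming $Leak \neq \emptyset$; the case $Leak = \emptyset$ needs a separate treatment accounting for the common factor of $\partial$ in Theorem~\ref{thm:ioeqn}) the coordinates of $c$ are exactly the coefficients in $\partial$ of $\det(\partial I - A)$ together with those of $\det(A_{ji})$ for $(i,j) \in Out \times In$. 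I would then induct on $|Leak|$. Two elementary observations set this up. First, when $Leak \neq \emptyset$, enlarging $In$ or $Out$ only appends coordinates to $c$ and so never decreases $\dim\,\mathrm{image}\,c$, since the gcd in Theorem~\ref{thm:ioeqn} stays $1$ by Corollary~\ref{cor:relprime}. Second, for any $k$ the coefficient map of $(G, In, Out, \{k\})$ contains the coefficients of the $(1,1)$ input--output equation of $(G, \{1\}, \{1\}, \{k\})$ (same matrix $A(G)$, and $1 \in In \cap Out$), so its image has dimension at least that of the latter. Combined with Theorem~\ref{thm:theorem1}, these give the base case $|Leak| = 1$: the model $(G, \{1\}, \{1\}, \{k\})$ is generically locally identifiable with $|E|+1$ parameters, and $(G, In, Out, \{k\})$ also has $|E|+1$ parameters, hence is generically locally identifiable.

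\textbf{Inductive step.} Given $|Leak| \geq 2$, choose $\ell \in Leak$ with $\ell \neq 1$; since $Leak \subseteq In \cup Out$ we have $\ell \in In \cup Out$, and moreover $|In \cup Out| \geq |Leak| \geq 2$, so there genuinely are extra inputs or outputs. Setting $a_{0\ell} = 0$ turns $(G, In, Out, Leak)$ into $(G, In, Out, Leak \setminus \{\ell\})$ with the identical $(In, Out)$ data, so $c|_{a_{0\ell}=0}$ is precisely the coefficient map of the latter model, which by the inductive hypothesis has Jacobian of rank $|E| + |Leak| - 1$ at a generic point of the slice $\{a_{0\ell} = 0\}$. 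At such a point the columns of the Jacobian of $c$ other than the $a_{0\ell}$-column coincide with the Jacobian of $c|_{a_{0\ell}=0}$; hence it suffices to exhibit one such point at which $\partial c / \partial a_{0\ell}$ lies outside the span of the remaining columns, and lower semicontinuity of rank then gives the generic statement.

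\textbf{The crux and the obstacle.} The heart of the matter is thus the claim: \emph{adding one leak back at a compartment that is already an input or an output raises $\dim\,\mathrm{image}\,c$ by exactly one}. Since $a_{0\ell}$ enters $c$ only through the diagonal entry $a_{\ell\ell}$, the restriction of $\partial c/\partial a_{0\ell}$ to the characteristic-polynomial coordinates is, up to signs, the coefficient vector of the characteristic polynomial of the principal submatrix of $A(G)$ obtained by deleting row and column $\ell$, and likewise for $\partial c / \partial a_{0\ell'}$ with each other leak $\ell'$; on the numerator coordinates of the $(1,1)$ equation it is the coefficient vector of the characteristic polynomial of the submatrix deleting rows and columns $1$ and $\ell$. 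Each of these characteristic polynomials of nested submatrices is missing the free diagonal variable of its own deleted vertex while retaining those of the other leak vertices, and the resulting system of coefficient-wise equations forces (generically) that the ``diagonal part'' of any linear relation among the Jacobian columns is trivial, i.e.\ these polynomials are linearly independent over the function field. What is needed in addition is that the numerator coordinates coming from the extra inputs and outputs --- available precisely because $Leak \subseteq In \cup Out$ --- rigidify the edge directions enough that no relation mixing $\partial c/\partial a_{0\ell}$ with the edge columns survives. This disentangling is, in spirit, the same as the comparison of the cycle-space row space with the non-leak-diagonal row space in the proof of Lemma~\ref{lem:sliceparam}, and through the hypothesis of being an identifiable cycle model it is Theorem~\ref{thm:oldstandardmodels} that guarantees the cycle/edge block already contributes its maximal possible rank $|E|+1$. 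I expect the main obstacle to be exactly this last piece: identifying which numerator coordinates perform the rigidification and proving the requisite linear independence in interaction with the characteristic-polynomial rows and the non-leak diagonal constraints.
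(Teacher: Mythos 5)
Your overall strategy (induction on $|Leak|$, with Theorem \ref{thm:theorem1} as the base case) differs from the paper's, which is a single direct rank computation: the paper augments the coefficient map $c$ of the identifiable cycle model by exactly one extra coordinate per compartment in $(In\cup Out)\setminus\{1\}$ --- namely the leading coefficient of the corresponding $u_j$ or $y_i$ term, shown in Proposition \ref{prop:paths} to equal the sum of shortest monomial paths --- and then proves in Lemma \ref{lem:algindset} that the resulting map $\bar f=(f,g)$ restricted to the slice $\Lambda$ has full rank $|E|+|Leak|$, by evaluating the Jacobian at the all-ones point and right-multiplying the non-cycle block by $M(G)^T$ so that independence reduces to a diagonal-dominance/irreducibility argument. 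A different route could in principle work, but as written your proposal has a genuine gap at exactly the step you flag as ``the crux'': the claim that restoring a leak at a compartment $\ell\in In\cup Out$ raises $\dim\,\mathrm{image}\,c$ by exactly one is asserted, not proved. You never identify which numerator coefficients of the extra input--output equations are to be used, nor do you prove the linear independence they are supposed to supply; the sentence about the extra coordinates ``rigidifying the edge directions'' is precisely the content that needs a proof. Note also that the difficulty is worse than your sketch suggests: in the parametrization on $\Lambda$ the non-leak diagonal entries are $a_{jj}=-\sum_k a_{kj}$, so the ``diagonal part'' and ``edge part'' of the Jacobian columns are entangled, and one must rule out relations mixing $\partial c/\partial a_{0\ell}$ with cycle rows \emph{and} with the non-leak diagonal rows simultaneously. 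This is exactly the comparison of the cycle space with the row space of the matrix $B$ that Lemma \ref{lem:algindset} carries out via $BM(G)^T$, and nothing in your proposal replaces it.

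A secondary issue: your induction starts at $|Leak|=1$, but the theorem as stated permits $Leak=\emptyset$ (the condition $Leak\subseteq In\cup Out$ is then vacuous), and you only remark that this case ``needs a separate treatment'' because Corollary \ref{cor:relprime} requires at least one leak. The paper handles $Leak\subsetneq In\cup Out$ (including $Leak=\emptyset$) by first proving the full-rank statement for $Leak=In\cup Out$ and then observing that deleting $k$ leak parameters drops the rank by exactly $k$, since the self-cycles $a_{ii}$ of the deleted leaks appear in no other component of $\bar f$. Some such argument would need to be supplied in your framework as well.
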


Note that Theorem \ref{thm:theorem1} and Theorem \ref{thm:theorem2}
intersect in the special case when $In = Out = Leak  = \{1\}$.
To prove Theorem \ref{thm:theorem2}, we will need to add coefficients to our map $c$ for an identifiable cycle model, corresponding to the additional coefficients we obtain from our new input-output equations.  The first and second authors found in \cite{MeshkatSullivant} that the coefficient map $c$ factors through the cycle map $f$.  We now show that the additional coefficients of the input-output equations can also be described in terms of the graph $G$.

\begin{defn} A \textit{path} from vertex $i_{k}$ to vertex $i_{0}$ in a directed graph $G$ is a sequence of 
vertices $i_{0},i_{1}, i_{2}, \ldots, i_{k}$
such that $i_{j+1} \to i_{j}$ is an edge for all $j = 0, \ldots, k-1$.
To a path $P = i_{0},i_{1}, i_{2}, \ldots, i_{k}$, we associate the
monomial $a^{P} = a_{i_{0}i_{1}}a_{i_{1}i_{2}}\cdots a_{i_{k-1}i_{k}}$,
which we refer to as a \textit{monomial path}.  If a monomial path $a^{P}$ has
length $k$, we refer to it as a $k$-path.  
\end{defn}

Let $j$ be an input compartment and let $i$ be an output compartment, $i\neq{j}$.  Let $\mathcal{P}(i,j)$ be the set of all paths from vertex $j$ to vertex $i$.  Let $l(P)$ denote the length of a path $P \in \mathcal{P}$.  Then we can define the minimum length of all paths from vertex $j$ to vertex $i$ as $d(i,j) = min_{P \in \mathcal{P}(i,j)} l(P)$. 

\begin{prop} \label{prop:paths} Let $j$ be an input compartment and let $i$ be an output compartment, $i\neq{j}$, for a model $(G,In,Out,V)$. 
In the input-output equation corresponding to this output ($y_i$), the coefficients of the $u_j^{(k)}$ terms can be written in terms of paths from $j$ to $i$ and self-cycles.  Moreover, the coefficient of the highest order derivative term in $u_j$ can be written solely in terms of shortest paths from $j$ to $i$ as:
 $$\sum_{P\in{\mathcal{P}(i,j)}: l(P)=d(i,j)} a^P.$$ 
\end{prop}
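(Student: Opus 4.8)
The plan is to identify the coefficient of $u_j$ in the input-output equation with a single minor of $\partial I - A$ and then read off its structure combinatorially. Since $Leak = V$ there is at least one leak, and $G$ is strongly connected throughout this section, so Corollary~\ref{cor:relprime} applies: the input-output equation for $y_i$ is $\det(\partial I - A)\,y_i = \sum_{j\in In}(-1)^{i+j}\det(A_{ji})\,u_j$, where $A_{ji}$ is $\partial I - A$ with row $j$ and column $i$ removed. Hence the coefficient of $u_j^{(k)}$ in the equation for $y_i$ equals the coefficient of $\partial^{k}$ in the differential polynomial $(-1)^{i+j}\det(A_{ji})$, and the whole statement reduces to an analysis of this determinant. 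I will use that the off-diagonal entry $(\partial I - A)_{pq}$ equals $-a_{pq}$ exactly when $q\to p\in E$ (and $0$ otherwise), while $(\partial I - A)_{pp}=\partial-a_{pp}$, so an off-diagonal factor records a \emph{reversed} edge, consistent with the convention $a^{P}=a_{i_0 i_1}\cdots a_{i_{k-1}i_k}$ for a path from $i_k$ to $i_0$.

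For the first assertion I would expand $\det(A_{ji})$ over bijections $\phi\colon V\setminus\{j\}\to V\setminus\{i\}$. Because $i\neq j$, in the functional digraph of $\phi$ (arcs $p\to\phi(p)$) the vertex $j$ has out-degree $0$, the vertex $i$ has in-degree $0$, and every other vertex has in-degree and out-degree $1$; this forces the structure to be a single directed path from $i$ to $j$ together with a permutation of the remaining vertices. Reversing that path yields a self-avoiding path $P$ from $j$ to $i$ in $G$ contributing $(-1)^{l(P)}a^{P}$; each nontrivial cycle of the permutation contributes (up to sign) a monomial cycle; each fixed point $w$ contributes the diagonal entry $\partial - a_{ww}$, which is the only source of powers of $\partial$. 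Collecting terms by power of $\partial$ therefore writes every coefficient of $(-1)^{i+j}\det(A_{ji})$ as a polynomial in the monomial paths from $j$ to $i$ and in self-cycle monomials $a_{ww}$ (together, in general, with other monomial cycles), which gives the first claim.

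For the statement about the highest-order derivative term I would bypass the sign computation in the permutation expansion and argue via generating functions: $(-1)^{i+j}\det(A_{ji}) = \mathrm{adj}(\partial I - A)_{ij} = \det(\partial I - A)\cdot\big[(\partial I - A)^{-1}\big]_{ij}$, and formally $(\partial I - A)^{-1}=\sum_{m\ge 0}A^{m}\partial^{-m-1}$, so $\big[(\partial I - A)^{-1}\big]_{ij}=\sum_{m\ge 0}(A^{m})_{ij}\,\partial^{-m-1}$. Now $(A^{m})_{ij}$ is a sum over length-$m$ walks from $j$ to $i$ in which each step is either a reversed edge of $G$ (weight $a_{pq}$) or a stay on the diagonal; such a walk of length $m<d(i,j)$ cannot reach $i$ from $j$, and one of length exactly $d(i,j)$ has no room for a stay, hence is an honest shortest path $P$ from $j$ to $i$ of weight $a^{P}$. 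Thus $(A^{m})_{ij}=0$ for $m<d(i,j)$ and $(A^{d(i,j)})_{ij}=\sum_{P\in\mathcal{P}(i,j),\,l(P)=d(i,j)}a^{P}$. Multiplying by $\det(\partial I - A)=\partial^{n}+\cdots$ with $n=|V|$ shows the highest power of $\partial$ occurring in $\mathrm{adj}(\partial I - A)_{ij}$ is $\partial^{\,n-1-d(i,j)}$, with coefficient $\sum_{l(P)=d(i,j)}a^{P}$; equivalently, the highest-order derivative of $u_j$ appearing in the equation for $y_i$ is $u_j^{(n-1-d(i,j))}$, and its coefficient is the asserted sum.

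The delicate point is the bookkeeping inside $(A^{m})_{ij}$: one must check that a walk realizing the minimal length $d(i,j)$ cannot use a diagonal entry (a stay would compress the walk to a strictly shorter genuine path, contradicting minimality) and that the minimal-length walks correspond bijectively to shortest directed paths $j\to i$ in $G$ under the edge-reversal that the convention for $a^{P}$ already encodes. In the permutation-expansion approach the corresponding obstacle is instead the sign calculation — reconciling $(-1)^{i+j}$, $\operatorname{sgn}(\phi)$, and the $(-1)^{l(P)}$ coming from the off-diagonal entries — which is why I would reserve that expansion for the qualitative first claim and use the $(\partial I - A)^{-1}$ route for the exact leading coefficient.
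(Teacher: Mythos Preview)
Your argument is correct. Both claims go through as written: the permutation expansion of the minor $\det(A_{ji})$ does decompose into a self-avoiding $j\to i$ path times a disjoint cycle cover times diagonal factors, and the adjugate--geometric-series computation pins down the leading coefficient cleanly, with the minimality argument (no stays, no repeated vertices in a shortest walk) handled correctly.

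Your route differs from the paper's, however. The paper treats both claims at once via a single trick: it augments $A$ to $\tilde A$ by inserting a formal entry $a_{ji}$ (an artificial edge $i\to j$), observes that $(-1)^{i+j}\det(A_{ji}) = \partial\,\mathrm{char}(\tilde A)/\partial a_{ji}$, and then invokes the standard cycle-cover expansion of $\mathrm{char}(\tilde A)$. Differentiating in $a_{ji}$ kills all cycle covers not using the artificial edge and, in those that do, converts the cycle through $a_{ji}$ into a path from $j$ to $i$; the shortest such cycle has length $d(i,j)+1$, which immediately gives the leading coefficient as $\partial c_{d(i,j)+1}/\partial a_{ji}=\sum_{l(P)=d(i,j)} a^{P}$. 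So the paper never needs the resolvent expansion or walk counting; conversely, your $(\partial I - A)^{-1}=\sum_{m\ge 0}A^{m}\partial^{-m-1}$ argument delivers the exact leading term with the correct sign automatically, whereas the partial-derivative approach requires tracking $(-1)^{i+j}$ and the cycle-cover signs. Each method buys something: the paper's is a one-line reduction to the known cycle structure of the characteristic polynomial, while yours separates the qualitative structure (permutation expansion) from the quantitative leading term (resolvent series) and avoids the sign bookkeeping you flagged.

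One small remark: your parenthetical ``together, in general, with other monomial cycles'' is accurate and in fact matches what the paper's own proof actually derives --- the disjoint cycle factors need not be $1$-cycles --- even though the proposition's wording says only ``paths and self-cycles.'' This is a looseness in the statement rather than a defect in your proof.
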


\begin{proof} 
Recall the input-output equation corresponding to $y_i$ is:
$$
\det(\partial{I}-A)y_i=\sum_{j \in In} (-1)^{i +j}\det(A_{ji})u_j.
$$ 
where $A_{ji}$ is the submatrix of $\partial{I}-A$ obtained by deleting the $jth$ row and the $ith$ column
of $\partial{I}-A$.  Let us examine these coefficients of the $u_j^{(k)}$ terms, for $j\neq{i}$.  The characteristic polynomial of $A$ can be determined by expanding $\det(\partial{I}-A)$ along the $jth$ row.  Let $\tilde{A}$ be the matrix $A$ with the entry $a_{ji}$ nonzero.  Then for $i\neq{j}$, taking the partial derivative of the characteristic polynomial of $\tilde{A}$ with respect to $a_{ji}$ precisely gives the polynomial $\det(A_{ji})$, up to a minus sign.    Since the coefficients of the characteristic polynomial of $\tilde{A}$ factor through the cycles, then taking the derivative of these coefficients with respect to $a_{ji}$ has the effect of removing all monomial terms not involving $a_{ji}$ and setting $a_{ji}$ to one in the monomial terms that do involve $a_{ji}$.  This effectively transforms all cycles involving $a_{ji}$ to paths from the $j^{th}$ vertex to the $i^{th}$ vertex.  Thus, each of the monomial terms are products of paths from the $j^{th}$ vertex to the $i^{th}$ vertex and self-cycles.  

Let $\calc(\tilde{G})$ be the set of all cycles in $\tilde{G}$, corresponding to a matrix $\tilde{A}$.  To determine the coefficient of the highest order derivative term in $u_j$, recall that the coefficients of the characteristic polynomial of $\tilde{A}$ can be written as

$$c_{i} =  (-1)^i\sum_{C_{1}, \ldots, C_{k} \in \calc(\tilde{G})} \prod_{j = 1}^{k}  {\rm sign}(C_{j}) a^{C_{j}},$$
where the sum is over all collections of vertex disjoint cycles
involving exactly $i$ edges of $\tilde{G}$, and ${\rm sign}(C) = 1$ if
$C$ is odd length and ${\rm sign}(C) = -1$ if $C$ is even length. This means for every $i$, all cycles of length $i$ appear as monomial terms in $c_{i}$, and for $j > i$, these cycles of length $i$ appear as monomial products with other cycles in $c_{j}$.

We now determine the highest order derivative term in $u_j$.  Since $\det(A_{ji})$ is just the partial derivative of the characteristic polynomial of $\tilde{A}$ with respect to $a_{ji}$, up to a minus sign, then the right-hand side of the input-output equation for output $y_i$ is of the form:
$$
\sum_{j \in In} (-1)^{i+j}(\frac{\partial{c_{1}}}{\partial{a_{ji}}}u_{j}^{(n-1)}+\frac{\partial{c_{2}}}{\partial{a_{ji}}}u_{j}^{(n-2)}+\frac{\partial{c_{3}}}{\partial{a_{ji}}}u_{j}^{(n-3)}+\cdots+\frac{\partial{c_{n}}}{\partial{a_{ji}}}u_{j})
$$
where $|V|=n$.  Let the length of the shortest cycle involving $a_{ji}$ be of length $d(i,j)+1$, so that the length of the shortest path from $j$ to $i$ is of length $d(i,j)$.  Then the coefficient of the highest order derivative term in $u_j$ is $\partial{c_{d(i,j)+1}}/\partial{a_{ji}}$, which is a sum of the shortest paths (of length $d(i,j)$) from $j$ to $i$.  Thus it is of the form $\sum_{P\in{\mathcal{P}(i,j)}: l(P)=d(i,j)} a^P$.
\end{proof}

It is sufficient to prove Theorem \ref{thm:theorem2} for the case when $\{1\} = In \cap Out$ and $Leak \subseteq In \cup Out$, since if the model is identifiable in this case, it will remain identifiable with any additional inputs or outputs.  The main idea behind proving Theorem \ref{thm:theorem2} is the following.  For each of the inputs/outputs to compartments other than compartment $1$, we will add one additional coefficient to our map $c$, so that there are an additional $|In \cup Out| - 1$ coefficients.  Note that, in general, there are many other coefficients that arise from these additional input-output equations, but we only add one additional coefficient for each additional input/output other than compartment $1$, which is why our results will give sufficient conditions but not necessary conditions for identifiability.  We call the collection of these additional coefficients the \textit{sum of paths} map:

\begin{defn}  Let $(G,In,Out,V)$ be a linear compartment model and let $g$ be a vector corresponding to a subset of the coefficient functions of all the input-output equations derived in Theorem \ref{thm:ioeqn} having the following components:  For each $i \in Out, i \neq {1}$, $g$ contains the coefficient term in the input-output equation in $y_i$ corresponding to the sum of shortest paths from $1$ to $i$, and for each $j \in In, j \neq {1}$, $g$ contains the coefficient term in the input-output equation in $y_1$ corresponding to the sum of shortest paths from $j$ to $1$.  The function $g$ defines a map $g : \rr^{|V| + |E|} \rightarrow \rr^{|In \cup Out| - 1}$ called the \textit{sum of paths} map.  
\end{defn}

Let $\bar{c}=(c,g)$, i.e. the vector whose components are the vector $c$ of coefficient functions for an identifiable cycle model $(G, \{1\}, \{1\}, V)$ and the vector $g$ for a model $(G, In, Out, V)$ defined above.  Thus the function $\bar{c}$ defines a map $\bar{c} : \rr^{|V|+|E|} \rightarrow \rr^{2|V|+|In \cup Out|-2}$.  Similarly, let $\bar{f}=(f,g)$, i.e the vector whose components are the vector $f$ of monomial cycle functions for an identifiable cycle model $(G, \{1\}, \{1\}, V)$ and the vector $g$ for a model $(G, In, Out, V)$.   Thus the function $\bar{f}$ defines a map $\bar{f} : \rr^{|V|+|E|} \rightarrow \rr^{|E|+|In \cup Out|}$.  Then certainly we still have that $\bar{c}$ factors through $\bar{f}$, without a loss of dimension.  That is, there exists a map $\bar{\Psi} : \rr^{|E|+|In \cup Out|} \rightarrow \rr^{2|V|+|In \cup Out| - 2}$ such that we have the following commutative diagram
$$\xymatrix{
\rr^{|V|+|E|}  \ar[r]^-{\displaystyle \bar{c}} \ar@/_/[rd]^{\displaystyle \bar{f}} & 
**[r] \rr^{2|V|+|In \cup Out|-2} \\
& **[r] \rr^{|E|+|In \cup Out|} \ar[u]_{\displaystyle \bar{\Psi}}}
$$
with the property that $\bar{c}=\bar{\Psi} \circ \bar{f}$ and and $\dim {\rm image} \ \bar{c} = \dim {\rm image} \ \bar{f}$.

Passing from a model $(G, \{1\}, \{1\}, V)$ to a model $(G, In, Out, Leak)$ such
that $\{1\} = In \cap Out$ and $Leak \subseteq In \cup Out$
amounts to restricting the parameter space $\rr^{|V| + |E|}$
to a linear subspace $\Lambda \subseteq  \rr^{|V| + |E|}$ of dimension $|E| + |Leak|$
and we would like the image of $\Lambda$ under the coefficient map $\bar{c}$
to have dimension $|E| + |Leak|$.  Since  $\bar{c}$ factors through $\bar{f}$ it suffices to prove
that the image of $\Lambda$ under $\bar{f}$ has dimension $|E| + |Leak|$.

\begin{lemma} \label{lem:algindset} Let $G = (V,E)$ be a directed graph with corresponding identifiable cycle model $(G, \{1\}, \{1\}, V )$.  Consider a model $(G, In, Out, Leak)$ such
that $\{1\} = In \cap Out$ and $Leak \subseteq In \cup Out$.  Let $\bar{f} : \rr^{|V|+|E|} \rightarrow \rr^{|E|+|In \cup Out|}$ denote the map $(f,g)$ where $f$ is the cycle map and $g$ is the sum of paths map defined above. 
Let $\Lambda \subseteq \rr^{|V| + |E|}$ be the linear space satisfying
$$
\Lambda = \{ A \in \rr^{|V| + |E|}: a_{ii} =  - \sum_{j,j \neq i} a_{ji} \mbox{ for all }
i \notin{Leak} \}.
$$
Then the dimension of the image of $\Lambda$ under the map $\bar{f}$ is $|E| + |Leak|$. 
\end{lemma}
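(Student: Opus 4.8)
The plan is to follow the template of the proof of Lemma~\ref{lem:sliceparam}. Since $\Lambda$ is a linear space of dimension $|E|+|Leak|$ and $\bar f|_{\Lambda}$ is polynomial, its image has dimension equal to the rank of the Jacobian at a generic point, so it suffices to exhibit one point where that rank equals $|E|+|Leak|$. I would coordinatize $\Lambda$ by the $|E|$ edge parameters together with the $|Leak|$ diagonal parameters $a_{ii}$ with $i\in Leak$; the remaining diagonals are the dependent quantities $a_{ii}=-\sum_{k\colon i\to k\in E}a_{ki}$ for $i\notin Leak$. Note that no component of $\bar f$ other than the self-cycle $a_{ii}$ itself involves $a_{ii}$ for $i\in Leak$, since the nondiagonal cycle monomials and the path monomials are products of off-diagonal parameters only. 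Evaluate the Jacobian at the point where every edge parameter is $1$ and sort its $|E|+|In\cup Out|$ rows into four blocks: (L) the $|Leak|$ self-cycles $a_{ii}$ with $i\in Leak$; (N) the $|V|-|Leak|$ self-cycles $a_{ii}$ with $i\notin Leak$; (C) the $|E|-|V|+1$ rows for a basis $\calc$ of nondiagonal cycles; and (P) the $|In\cup Out|-1$ sum-of-shortest-paths coordinates making up $g$.

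Block (L) is a set of distinct unit coordinate vectors in the leak-diagonal directions, and every other row vanishes there, so $\mathrm{rank}\,J=|Leak|+\dim W$, where $W\subseteq\rr^{|E|}$ is the span of the edge-parameter parts of the rows of blocks (N), (C), (P); it remains to show $W=\rr^{|E|}$. At the all-ones point the edge part of the row for $a^{C}$ is the indicator vector $\mathbf 1_{C}$, and by Proposition~\ref{prop:kernelincidence} these span the cycle space $\ker M(G)$; the edge part of the row for $a_{ii}$ with $i\notin Leak$ is the negated out-edge indicator $-\sum_{k\colon i\to k}e_{(i,k)}$; and the edge part of a row of (P) is a sum $\sum_{P}\mathbf 1_{P}$ over the shortest paths between the relevant pair (Proposition~\ref{prop:paths}), each term squarefree because a shortest path repeats no vertex. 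Since block (C) already spans $\ker M(G)$ and $\dim W=\dim\ker M(G)+\dim M(G)(W)$, we get $W=\rr^{|E|}$ precisely when $M(G)$ maps the span of blocks (N) and (P) onto $\mathrm{image}\,M(G)$, which is $Z:=\{v\in\rr^{|V|}:\sum_{i}v_{i}=0\}$ because $G$ is strongly connected. Using $M(G)(e_{(i,k)})=e_{i}-e_{k}$, block (N) maps to $w_{i}:=\sum_{k\colon i\to k}(e_{k}-e_{i})$ for $i\notin Leak$, and a telescoping computation maps each row of (P) to a nonzero multiple of $e_{1}-e_{v}$ for the associated $v\in(In\cup Out)\setminus\{1\}$ (the scalar being the number of shortest paths, which is positive by strong connectivity). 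So the whole argument reduces to showing that $\{w_{i}:i\notin Leak\}\cup\{e_{1}-e_{v}:v\in(In\cup Out)\setminus\{1\}\}$ spans $Z$.

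All of these vectors lie in $Z$, so the claim is equivalent to: every $\phi\in\rr^{|V|}$ with $\langle\phi,w_{i}\rangle=0$ for all $i\notin Leak$ and $\phi_{v}=\phi_{1}$ for all $v\in In\cup Out$ is a scalar multiple of the all-ones vector $\mathbf 1$. This is where the hypotheses $Leak\subseteq In\cup Out$ and strong connectivity come in, through a discrete maximum principle playing the role of the ``peeling'' step in Lemma~\ref{lem:sliceparam}. After subtracting $\phi_{1}\mathbf 1$ (which preserves both conditions) we may assume $\phi_{1}=0$, hence $\phi_{v}=0$ for every $v\in In\cup Out$; and $\langle\phi,w_{i}\rangle=0$ says that $\phi_{i}$ is the average of $\phi$ over the out-neighbours of $i$ whenever $i\notin Leak$, in particular whenever $i\notin In\cup Out$. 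If $\phi$ is not identically zero, I would assume $\max_{i}\phi_{i}>0$ (the minimum case being symmetric) and put $S=\{i:\phi_{i}=\max_{j}\phi_{j}\}$; any $i\in S$ has $\phi_{i}>0=\phi_{1}$, so $i\notin In\cup Out$ and therefore $i\notin Leak$, so the averaging identity at $i$ forces all out-neighbours of $i$ into $S$. Then $S$ is nonempty and closed under passing to out-neighbours, so strong connectivity yields $S=V$, contradicting $1\notin S$.

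I expect the maximum-principle step to be the crux. The decisive point is that a vertex attaining the maximum of $\phi$ can never be ``pinned'' to the value $\phi_{1}$, hence can never lie in $In\cup Out$, hence lies outside $Leak$ and obeys the averaging identity; this is exactly the content of the hypothesis $Leak\subseteq In\cup Out$. Everything else --- verifying that the leak diagonals decouple into block (L), that block (C) spans the cycle space, and that the sum-of-paths coordinates telescope under $M(G)$ --- is routine bookkeeping.
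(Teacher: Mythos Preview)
Your proof is correct and shares its structural skeleton with the paper's: evaluate the Jacobian at the all-ones point, split off the leak self-cycles, and then use the incidence matrix $M(G)$ to separate the cycle space (block (C)) from the remaining rows. The paper also multiplies the non-cycle block by $M(G)^{T}$, obtaining precisely your vectors $w_i$ and the multiples of $e_1-e_v$, so up to this point the two arguments coincide.

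The genuine difference is in how the last step is finished. The paper first restricts to the case $Leak=In\cup Out$, writes out the $(|V|-1)\times|V|$ matrix $BM(G)^{T}$ explicitly, and argues full rank via (irreducible) diagonal dominance of the $(|V|-|Leak|)\times(|V|-|Leak|)$ block corresponding to the non-leak vertices, together with a separate block-triangular argument when that block is reducible; the case $Leak\subsetneq In\cup Out$ is then handled by a short add-and-remove-leaks argument. You instead pass to the dual: a vector $\phi$ annihilating all the $w_i$ and all the $e_1-e_v$ is out-neighbour harmonic on $V\setminus Leak$ and constant on $In\cup Out$, and a discrete maximum principle (using $Leak\subseteq In\cup Out$ and strong connectivity) forces $\phi$ to be constant. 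This is the same phenomenon viewed from the other side---irreducible diagonal dominance and the maximum principle for harmonic functions on a strongly connected digraph are essentially equivalent---but your formulation is cleaner: it handles all $Leak\subseteq In\cup Out$ uniformly with no case split, and the role of the hypothesis $Leak\subseteq In\cup Out$ is made completely transparent (a maximiser cannot be pinned, hence lies outside $Leak$, hence is harmonic). The paper's route has the minor advantage of staying entirely in matrix language, which matches the surrounding exposition.
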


\begin{proof} 
Since $\Lambda$ is a linear space, we can replace $\bar{f}$ by the natural map  
from $\rr^{|E| + |Leak|}  \rightarrow \rr^{|E| + |In \cup Out|}$.  We also call this map $\bar{f}$.
To show that the dimension is correct, we compute the Jacobian of $\bar{f}$
and show that it has full rank.

We start with the case where $Leak=In \cup Out$.  Form the Jacobian of the map $\bar{f}$ which has columns corresponding to the $|E|+|Leak|$ parameters and rows corresponding to the $|E|-|V|+1$ cycles, the $|Leak|$ self-cycles, the $|V|-|Leak|$ non-leak diagonal entries, and the $|Leak|-1$ sums of shortest paths from input to output compartments.  Clearly, the rows corresponding to the $|Leak|$ self-cycles are linearly independent with the rest of the rows, so we focus on the $|E|$ by $|E|$ submatrix ignoring those rows and columns, and call this matrix $J$.  Arrange the matrix so that the first $|E|-|V|+1$ rows correspond to the (non-diagonal) cycles of $G$, the next $|V|-|Leak|$ rows correspond to the non-leak diagonal elements, and the last $|Leak|-1$ rows correspond to the sums of shortest paths from input to output compartments.  Let the first $|E|-|V|+1$ rows be called $A$ and the last $|V|-1$ rows be called $B$.  Clearly the rows of $A$ are linearly independent.  We want to show that the rows of $B$ are linearly independent.  Then we will show that the row space of $A$ and the row space of $B$ intersect only in the origin.  We will do this by showing that nothing in the row space of $B$ is in the kernel of the incidence matrix of the graph, $M(G)$, and thus nothing in the row space of $B$ can be described as a weighted sum of cycles (except the zero vector) from Proposition \ref{prop:kernelincidence}.  

To prove that $J$ generically has the maximal possible rank, it again suffices to find
some point where the evaluation of $J$ at this point yields
the maximal possible rank.  We choose the point where we
set all the edge parameters $a_{ij} = 1$ for all $ j \to i \in E$.
The row space of $A$ is exactly the cycle space of the graph $G$. 
The matrix $B$ is a $(|V|-1) \times |E|$ matrix.  The first $|V|-|Leak|$ rows correspond to the non-leak diagonal entries, where the (negated) row corresponding to vertex $i \in V \setminus Leak$
has a one for an edge $i' \to j'$ if and only if $i = i'$, with all other entries zero.  The last $|Leak|-1$ rows correspond to the sums of shortest paths from input vertices to output vertices, where the row corresponding the sum of all paths from $j$ to $i$ is a sum of all indicator vectors of paths from $j$ to $i$. 

We first note that in order to show that the rows of $B$ are linearly independent, i.e. there exists no $1 \times (|V|-1)$ row vector $y$ such that $yB=0$, we can instead show that there exists no $y$ such that $yBC=0$ for some matrix $C$.  In other words, we will show that the rows of this new matrix $BC$ are linearly independent.  Let $C$ be the transpose of the $|V|$ by $|E|$ incidence matrix $M(G)$ from Equation \ref{eq:eg}, where the rows of $M(G)^T$ have been arranged corresponding to columns of $B$.  In other words, the edge in the $i^{th}$ column of $B$ corresponds to the edge in the $i^{th}$ row of $M(G)^T$.  Clearly the product $BM(G)^{T}$ has $|V|-1$ rows and $|V|$ columns. 
Then
 $$BM(G)^{T}_{ij} = \sum_{k: M(G)^{T}_{kj}=1}{B_{ik}} - \sum_{k: M(G)^{T}_{kj}=-1}{B_{ik}}.$$
Thus the $(i,j)$ entry of $BM(G)^{T}$ is the sum of all entries in the $i^{th}$ row of $B$ corresponding to edges leaving vertex $j$ minus the sum of all entries in the $i^{th}$ row of $B$ corresponding to edges entering vertex $j$. 
 
Let $b$ be a row of $B$ from the first $|V|-|Leak|$ rows corresponding to the non-leak vertex $j'$.  Then
\[
  bM(G)^T_{i'} = \left\{ 
  \begin{array}{l l l}
    -1 & \quad \text{if $j' \to i' \in E$}\\
   	k & \quad \text{if $i'=j'$}\\
    0 & \quad \text{otherwise,}\\
  \end{array} \right.
\]
where $k$ is the number of edges leaving vertex $j'$.  

Now let $b$ be a row of $B$ from the last $|Leak|-1$ rows corresponding to the sum of paths from vertex $j'$ to vertex $i'$.  Then
\[
  bM(G)^T_{i} = \left\{ 
  \begin{array}{l l l}
    k & \quad \text{if $i=j'$}\\
   	-k & \quad \text{if $i=i'$}\\
    0 & \quad \text{otherwise,}\\
  \end{array} \right.
\]
where $k$ is the number of paths from vertex $j'$ to vertex $i'$.  

Let the columns of $BC$ be permuted so that the first $|V|-|Leak|$ columns correspond to the non-leak vertices ($\notin{Leak}$) and the last $|Leak|$ columns correspond to the leak vertices ($\in{Leak}$).  The first $|V|-|Leak|$ rows are thus diagonally dominant and are clearly linearly independent.  The last $|Leak|-1$ rows are linearly independent since either $i'$ or $j'$ is one and $i'\neq{j'}$.  Moreover, the $|Leak|-1$ by $|Leak|$ submatrix corresponding to the last $|Leak|-1$ rows and last $|Leak|$ columns is full rank, since the only nonzero entries in the last $|Leak|-1$ rows appear in the last $|Leak|$ columns.  We must show that the $|V|-|Leak|$ by $|V|-|Leak|$ submatrix corresponding to the first $|V|-|Leak|$ rows and columns is full rank, and thus we will have that the rows of $BC$ are linearly independent.

If the matrix is strictly diagonally dominant, then we are done.  Thus, assume the matrix is not strictly diagonally dominant.  If the matrix is not similar via a permutation to a block upper triangular matrix, then we must show that it has at least one row that is strictly diagonally dominant, and thus it will be irreducibly diagonally dominant.  If all of the rows are weakly diagonally dominant, then then means that the sum of the columns is the zero vector and thus this subgraph (corresponding to vertices in $V \setminus Leak$) is not connected to the rest of the graph, a contradiction.  Now assume the matrix is similar via a permutation to a block upper triangular matrix.  The blocks in the block upper triangular form of the transformed matrix correspond to strongly connected components of the graph.  Since the permutation similarity transformation leaves the diagonal preserved, then these blocks are each irreducibly diagonally dominant.  Thus this matrix is full rank.

Thus, the rows of $BC$ must be linearly independent, which implies the rows of $B$ must be linearly independent.  Since there exists no $y$ such that $yBC=0$ for the matrix $C=M(G)^T$, then nothing in the row space of $B$ is in the kernel of $M(G)$, and thus nothing in the row space of $B$ can be described as a weighted sum of cycles (except the zero vector) from Proposition \ref{prop:kernelincidence}.  This means the row space of $A$ and the row space of $B$ intersect only in the origin, and thus the matrix $J$ is full rank.  Thus the dimension of the image of $\Lambda$ under the map $\bar{f}$ is $|E| + |Leak|$.

For the case where $Leak \subset In \cup Out$, note that if $k$ leaks are added so that $Leak = In \cup Out$, then the matrix $J$ is full rank using the same argument as above.  Removing these $k$ leaks decreases the rank by exactly $k$, since the self-cycles $a_{ii}$ for $i \in Leak$ are not involved in any of the other components of $\bar{f}$.  This also decreases the number of parameters by exactly $k$, and thus the dimension of the image of $\Lambda$ under the map $\bar{f}$ is $|E| + |Leak|$.    
\end{proof}

\begin{cor} \label{cor:cbar} The dimension of the image of $\bar{c}$ is $|E|+|Leak|$.
\end{cor}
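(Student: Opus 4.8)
The plan is to read Corollary~\ref{cor:cbar} off Lemma~\ref{lem:algindset} by exploiting the factorization of $\bar{c}$ through $\bar{f}$ recorded in the commutative diagram above. Here ``the image of $\bar{c}$'' is understood, as in the paragraph preceding Lemma~\ref{lem:algindset}, to mean the image of the linear slice $\Lambda$ under $\bar{c}$ — passing from $(G,\{1\},\{1\},V)$ to $(G,In,Out,Leak)$ with $\{1\}=In\cap Out$ and $Leak\subseteq In\cup Out$ is exactly restriction to $\Lambda$ — so the statement to prove is $\dim \bar{c}(\Lambda)=|E|+|Leak|$.

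First I would observe that the factorization is two-sided. One direction is the diagram itself: $\bar{c}=\bar{\Psi}\circ\bar{f}$ with $\bar{\Psi}$ a polynomial map, whence $\bar{c}(\Lambda)=\bar{\Psi}\bigl(\bar{f}(\Lambda)\bigr)$ and therefore $\dim \bar{c}(\Lambda)\le\dim \bar{f}(\Lambda)$. For the other direction, Theorem~\ref{thm:oldstandardmodels} says that every monomial cycle of $G$ is identifiable from $c$, i.e.\ the cycle map $f$ is a finitely multivalued function of $c$; since the sum-of-paths block $g$ is literally a coordinate block of both $\bar{c}=(c,g)$ and $\bar{f}=(f,g)$, it follows that $\bar{f}$ is a finitely multivalued function of $\bar{c}$, and a finitely multivalued algebraic map cannot increase dimension, so $\dim \bar{f}(\Lambda)\le\dim \bar{c}(\Lambda)$. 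Combining the two inequalities gives $\dim \bar{c}(\Lambda)=\dim \bar{f}(\Lambda)$, and Lemma~\ref{lem:algindset} evaluates the right-hand side as $|E|+|Leak|$, which proves the corollary.

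There is no real obstacle here; the entire content is in Lemma~\ref{lem:algindset}, and Corollary~\ref{cor:cbar} is a bookkeeping step. The only point that deserves a word of care is that the equality of image dimensions must be applied \emph{after} restricting to $\Lambda$: the equality $\dim\, {\rm image}\, \bar{c}=\dim\,{\rm image}\, \bar{f}$ stated for the full parameter space $\rr^{|V|+|E|}$ does not by itself control $\dim \bar{c}(\Lambda)$ versus $\dim \bar{f}(\Lambda)$ on the proper subvariety $\bar{f}(\Lambda)$, which is why I would invoke the two-sided (morphism together with finitely-multivalued-inverse) description above rather than the full-space dimension count alone.
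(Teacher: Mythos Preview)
Your argument is correct, but you and the paper are actually proving two different readings of the same sentence. The paper's proof explicitly takes ``the image of $\bar c$'' to be $\bar c(\rr^{|V|+|E|})$, the image of the \emph{unrestricted} parameter space: it re-runs the rank computation of Lemma~\ref{lem:algindset} on the full space (replacing the constrained non-leak diagonal rows by the free self-cycles $a_{ii}$) to conclude $\dim \bar f(\rr^{|V|+|E|})=|E|+|Leak|$, and then quotes the full-space equality $\dim{\rm image}\,\bar c=\dim{\rm image}\,\bar f$ recorded just before Lemma~\ref{lem:algindset}. You instead prove the restricted statement $\dim\bar c(\Lambda)=|E|+|Leak|$ directly, via a two-sided factorization: $\bar c=\bar\Psi\circ\bar f$ gives $\dim\bar c(\Lambda)\le\dim\bar f(\Lambda)$, and the finitely-multivalued inverse coming from Theorem~\ref{thm:oldstandardmodels} (applied componentwise, together with the shared $g$-block) gives the reverse inequality.

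Both routes are valid. Yours is arguably cleaner for the purposes of Theorem~\ref{thm:theorem2}: it delivers exactly the restricted statement that proof needs, without reopening any Jacobian computation, and your closing caveat correctly flags the step that the paper's route has to justify separately. The paper's route buys something complementary: by establishing $\dim\bar f(\Lambda)=\dim\bar f(\rr^{|V|+|E|})$, it knows $\bar f(\Lambda)$ is full-dimensional in the image of $\bar f$, so the generically-finite map $\bar\Psi$ stays dimension-preserving there --- which is how the phrase ``this must be the same for the restricted model'' in the proof of Theorem~\ref{thm:theorem2} is meant to be read.
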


\begin{proof} Following the proof in Lemma \ref{lem:algindset}, we can replace the non-leaks by the self-cycles $a_{ii}$ to get that the image of the unrestricted parameter space under the map $\bar{f}$ has dimension $|E|+|Leak|$.  By the assumption that $\dim  {\rm image} \ \bar{c}$ equals $\dim  {\rm image} \ \bar{\Psi} \circ \bar{f}$, we have that the image of the unrestricted parameter space under the map $\bar{c}$ has dimension $|E|+|Leak|$.
\end{proof}

\begin{proof}[Proof of Theorem \ref{thm:theorem2}]
By Lemma \ref{lem:algindset} and the comments preceding it
we know that the image of the restricted parameter space
under the map $\bar{f}$ has dimension $|E| + |Leak|$, which 
is equal to the dimension of the image of the full parameter
space under this map.  Since, the
dimension of the image of the coefficient map $\bar{c}$ is $|E| + |Leak|$ by Corollary \ref{cor:cbar},
this must be the same for the restricted model.  But the model
has $|E| + |Leak|$ parameters, hence it is generically locally identifiable.
\end{proof}

We now demonstrate Theorem \ref{thm:theorem2} on a three compartment model with input, output, and leak all from the first compartment, as well as a leak and output from the second compartment.  

\begin{ex} 
Our ODE system has the following form:

$$
\begin{pmatrix} 
\dot{x}_1 \\
\dot{x}_2 \\
\dot{x}_3 \end{pmatrix} = {\begin{pmatrix} 
-(a_{01}+a_{21}) & a_{12} & a_{13} \\
a_{21} & -(a_{02}+a_{12}+a_{32}) & 0 \\
0 & a_{32} & -a_{13} 
\end{pmatrix}} {\begin{pmatrix}
x_1 \\
x_2 \\
x_3 \end{pmatrix} } + {\begin{pmatrix}
u_1 \\
0 \\
0 \end{pmatrix}}
$$
$$ y_1=x_1$$
$$ y_2=x_2.$$

\begin{figure}
\begin{center}
\resizebox{!}{3cm}{
\includegraphics{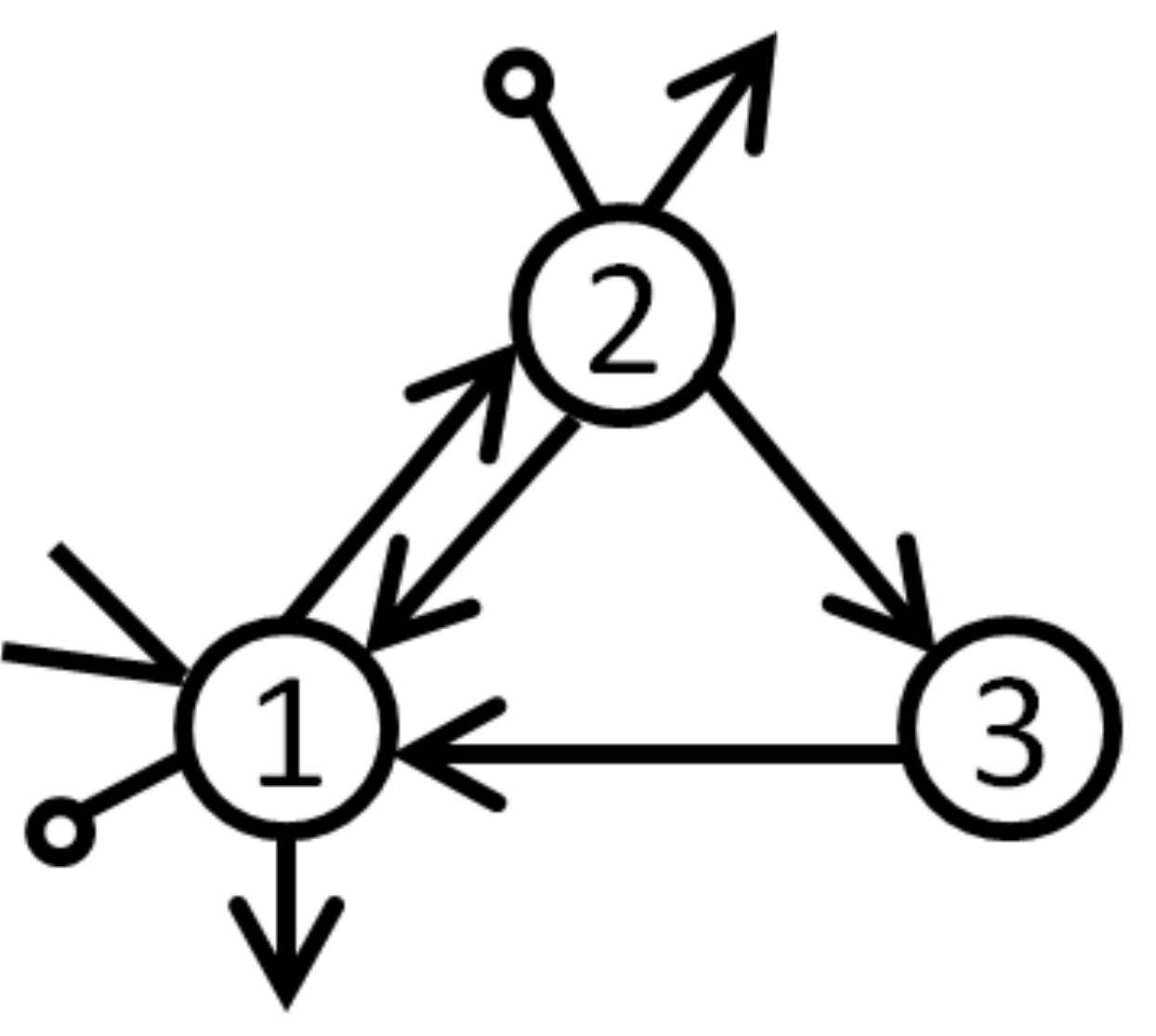}}
\end{center}\caption{A 3-compartment model}
\label{fig:3comp2leaks}
\end{figure}

The compartment model drawing is displayed in Figure \ref{fig:3comp2leaks}.  Note that this model can be obtained from the model in Figure \ref{fig:3comp3leaks} by removing the leak from the third compartment and adding an output to the second compartment, so that $In = \{1\}$, $Out=\{1,2\}$, and $Leak=\{1,2\}$.  Thus $1 \in In$, $1 \in Out$ and $Leak \subseteq In \cup Out$.  Since the model in Figure \ref{fig:3comp3leaks} corresponds to an identifiable cycle model, then our model in Figure \ref{fig:3comp1leak} is generically locally identifiable by Theorem \ref{thm:theorem2}.  
\end{ex}


\section{Identifiable submodels with One-way Flow}\label{sec:extensions}

Often, compartmental models are structured in a tiered format, wherein a series of submodels are chained together by one way flows connecting them, as shown in Figure \ref{fig:OneWayModels}. These cases are common in physiologically based pharmacokinetic models (see e.g. \cite{DiStefano1988, McMullin2003, Pilo1990}), where often one models the pharmacokinetics of a substance and its metabolites (so that each step in the metabolism of the substance forms a `tier' in the overall model). Such structures are also common in aging models, wherein individual movement or states are modeled as a single submodel, and then a discrete aging process is included, generating multiple copies of the submodel connected by a one-way flow. While these models are not strongly connected, their cascading nature makes it relatively easy to prove identifiability results. 

\begin{figure}
\centering
\includegraphics[width=0.75\textwidth]{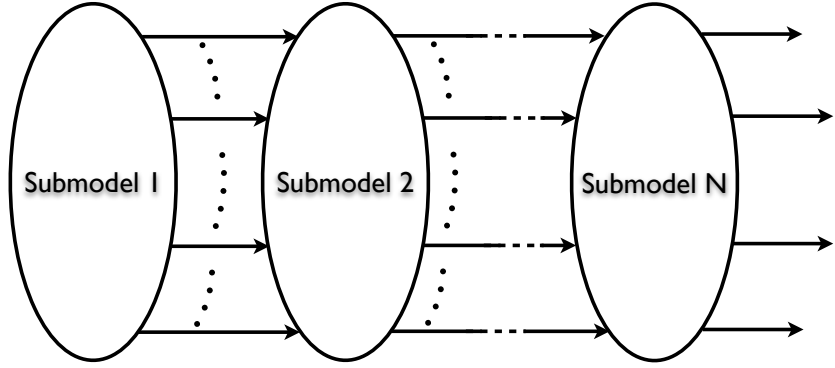}
\caption{General schematic for a one-way flow model with $N$ linear compartmental submodels. }
\label{fig:OneWayModels}
\end{figure}

We first prove a result that will allow us to determine identifiability of a tiered system by examining the identifiability of the top tiered system and then project that information to the lower tiered systems.

\begin{lemma} \label{lemma:observable} Let $G$ be a strongly connected graph corresponding to a model $(G, In, Out, Leak)$.  Then each state variable $x_j$ can be written in terms of parameters, input variables, output variables, and their derivatives.
\end{lemma}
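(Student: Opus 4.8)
The plan is to show that the model is observable from its output compartments — every state $x_j$ can be recovered from the inputs $u$, the outputs $y$, and their derivatives — which is exactly the assertion of the lemma. Write $\partial = d/dt$ and $n = |V|$. For an output compartment $o \in Out$ we have $y_o = x_o$, and differentiating repeatedly while using $\dot x = Ax+u$ gives, for every $m \ge 0$,
\[
y_o^{(m)} \;=\; e_o^{\top} A^{m} x \;+\; e_o^{\top} \sum_{\ell=0}^{m-1} A^{\ell}\, u^{(m-1-\ell)} .
\]
Hence each quantity $e_o^{\top} A^{m} x$ is, after rearranging, an explicit expression in the parameters (the entries of $A$), the inputs, the outputs, and their derivatives. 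Collecting these identities over all $o \in Out$ and $m = 0,1,\dots,n-1$ gives a linear system $\mathcal{O}\, x = w$, where $\mathcal{O}$ is the observability matrix with rows $e_o^{\top} A^{m}$ and each entry of the (time-dependent) vector $w$ is, by the display, a polynomial in the parameters, the inputs, the outputs, and their derivatives.

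It therefore suffices to show that $\mathcal{O}$ has rank $n$ (generically in the parameters, in keeping with the rest of the paper). Given this, one selects $n$ linearly independent rows of $\mathcal{O}$, inverts the resulting $n \times n$ matrix — whose inverse has entries rational in the parameters — and reads off each $x_j$ as a combination, with parameter-dependent coefficients, of the corresponding entries of $w$, hence of $u$, $y$, and their derivatives. Full rank of $\mathcal{O}$ is equivalent to triviality of the unobservable subspace, i.e.\ the largest $A$-invariant subspace contained in $\bigcap_{o \in Out}\{x : x_o = 0\}$, and dually to the smallest $A^{\top}$-invariant subspace containing $\{e_o : o \in Out\}$ being all of $\rr^{n}$.

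For the rank statement I would use strong connectedness of $G$ together with the combinatorial structure of $A$. Note first that $(A^{\top} e_o)_j \neq 0$ precisely when $j = o$ or $j \to o$ is an edge of $G$ (the diagonal entries of $A$ being generically nonzero); hence, iterating $A^{\top}$ on $\{e_o : o \in Out\}$, the coordinate support of the subspace generated so far grows by adjoining in-neighbors in $G$, and since $G$ is strongly connected and $Out \neq \emptyset$ this support exhausts $V$ after at most $n-1$ steps. To pass from ``the support is all of $V$'' to ``the dimension is $n$'', i.e.\ to rule out accidental linear dependencies among the rows $e_o^{\top} A^{m}$, I would argue generically: the vanishing of every $n\times n$ minor of $\mathcal{O}$ is a polynomial condition on the parameters, so it is enough to exhibit a single parameter choice for which $\mathcal{O}$ is invertible, and such a choice can be built by hand from an in-tree directed toward an output vertex. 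Alternatively, when the model has at least one leak, Theorem \ref{thm:irred} gives that the characteristic polynomial of $A$ (hence of $A^{\top}$) is generically irreducible; then the eigenvectors of $A$ are Galois-conjugate, so they either all lie in a given hyperplane $\{x : x_o = 0\}$ or none do, and the former is impossible since the $n$ distinct-eigenvalue eigenvectors span $\rr^{n}$ — the Popov--Belevitch--Hautus criterion then yields observability already from the single output $o$.

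The main obstacle is exactly this last step. Strong connectedness gives immediately that the outputs ``reach'' every compartment along directed paths, but converting that into the linear-algebraic statement that $\mathcal{O}$ has rank $n$ requires real input — either the genericity argument above (a good specialization, or noting that the bad locus is a proper subvariety of parameter space) or the irreducibility of the characteristic polynomial supplied by Theorem \ref{thm:irred}. I expect the cleanest writeup to invoke Theorem \ref{thm:irred} when a leak is present and to dispatch the leak-free case by the support-growth-plus-specialization argument.
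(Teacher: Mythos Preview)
Your approach is correct and takes a genuinely different route from the paper's. You work with the standard Kalman observability matrix $\mathcal{O}$ having rows $e_o^{\top}A^{m}$ and establish full rank via the Popov--Belevitch--Hautus criterion, using that irreducibility of the characteristic polynomial (Theorem~\ref{thm:irred}) forces the eigenvectors of $A$ to form a single Galois orbit, so none can lie in the hyperplane $\{x_o=0\}$. The paper instead fixes one output $i$, substitutes $x_i=y_i$ at the outset, and works with the reduced $(n-1)\times(n-1)$ matrix $\tilde{A}$ obtained by deleting the $i$th row and column: it builds the Krylov-type system with rows $a\tilde{A}^{k-1}$ (where $a$ is the $i$th row of $A$ with the $i$th entry removed) and proves invertibility by showing separately that $a^{\top}$ is not an eigenvector of $\tilde{A}^{\top}$ and that $\tilde{A}$ has distinct eigenvalues, the latter via Theorem~\ref{thm:irred} applied to the strongly connected components of the graph on $V\setminus\{i\}$. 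The paper's reduction has the pleasant feature that the deleted edges into $i$ function as leaks in $\tilde{A}$, so Theorem~\ref{thm:irred} applies uniformly and no separate $Leak=\emptyset$ case is needed; your argument is more conceptual and closer to textbook control theory, but you do need the extra specialization step when there is no leak (there the $0$-eigenvector of $A$ sits outside the Galois orbit and must be checked directly --- it is the all-ones vector, hence never in $\{x_o=0\}$, so this is easy to patch). Both routes deliver the explicit linear-in-$x_j$ expression the paper uses downstream in Proposition~\ref{prop:tiered}.
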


\begin{proof}
Let our model correspond to $\dot{x}=Ax+u$ with $u_{j} \equiv 0$ for $j \notin In$ and $y_i=x_i$ for $i \in Out$.  Choose some $i \in Out$.  We will solve for $x_j$, $j \neq i$, using differential elimination.  Let $a$ be the $i^{th}$ row of $A$ with the $i^{th}$ column removed.  Let $b$ be the $i^{th}$ column of $A$ with the $i^{th}$ row removed.  Let $\tilde{A}$ be the matrix $A$ with the $i^{th}$ row and $i^{th}$ column removed.  Let $\tilde{x}$ be the vector $x$ with $x_i$ removed.  Let $\tilde{u}$ be the vector $u$ with $u_i$ removed.  Substituting $y_i=x_i$ into our system, we get that the $i^{th}$ equation is $\dot{y_i}-a_{ii}y_i-u_i=a\tilde{x}$.  Taking the derivative of this equation and then substituting in the expressions for $\dot{\tilde{x}}$, we get that $\ddot{y_i}-a_{ii}\dot{y_i}-\dot{u_i}-aby_i-a\tilde{u}=a\tilde{A}\tilde{x}$.  Taking another derivative and then substituting in the expressions for $\dot{\tilde{x}}$, we get that $\dddot{y_i}-a_{ii}\ddot{y_i}-\ddot{u_i}-ab\dot{y_i}-a\dot{\tilde{u}}-a\tilde{A}by_i-a\tilde{A}\tilde{u}=a(\tilde{A})^2\tilde{x}$.  Continuing in this way, we get that $y_i^{(m-1)}-a_{ii}y_i^{(m-2)}-u_i^{(m-2)}-(\sum^{m-3}_{j=0}{a(\tilde{A})^{m-3-j}by_i^{(j)}+a(\tilde{A})^{m-3-j}\tilde{u}^{(j)}})=a(\tilde{A})^{m-2}\tilde{x}$, where $m=|V|$.  Thus, we have a system of $m-1$ equations in $m-1$ unknowns, $B\tilde{x}=c$ where the $k^{th}$ row of $B$ is $a(\tilde{A})^{k-1}$ and the $k^{th}$ row of $c$ is $y_i^{(k)}-a_{ii}y_i^{(k-1)}-u_i^{(k-1)}-(\sum^{k-2}_{j=0}{a(\tilde{A})^{k-2-j}by_i^{(j)}+a(\tilde{A})^{k-2-j}\tilde{u}^{(j)}})$ for $k=2,...,m-1$ and just $\dot{y_i}-a_{ii}y_i-u_i$ for $k=1$.  We must show that the rows $a(\tilde{A})^{k-1}$ for $k=1,...,m-1$ are linearly independent.  Let $c_1a+c_2a\tilde{A}+...+c_{m-1}a(\tilde{A})^{m-2}=0$.  Clearly, $a$ is nonzero.  Thus, we must show that: $(1)$ $a^T$ is not an eigevector of $\tilde{A}^T$ and $(2)$ the relationship $c_1I+c_2\tilde{A}+...+c_{m-1}(\tilde{A})^{m-2}=0$ implies that $c_1=c_2=...=c_{m-1}=0$.  

We first show that $a^T$ is not an eigevector of $\tilde{A}^T$ for a generic choice of parameters.  Let $i$ be $i \in Out$ previously chosen.  If there exists a $\lambda$ such that $\tilde{A}^{T}a^T = \lambda a^T$, then we first show that $a^T$ cannot have only one nonzero entry, $a_{ij}$ for $j \neq i$.  Assume, for a contradiction, that this is the case.  This means the entries $a_{ik}$ of $a^T$ for $k \neq j$ are zero, which means there are no edges $k \rightarrow i$ for $k \neq j$.  Then the product $\tilde{A}^{T}a^T$ reduces to a product of $a_{ij}$ and the $j^{th}$ column of $A^T$ with the $i^{th}$ row removed.  Thus, the entries are of the form $a_{jk}a_{ij}$ for $k \neq i$.  But only the entry $a_{jj}a_{ij}$ should be nonzero, and thus the other entries must be zero, so that $a_{jk}$ must be zero.  This means there are no edges $k \rightarrow j$ for $k \neq i$.  Thus, there are no edges to $i$ and there are no edges to $j$ from vertices other than $i,j$.  But this means the graph $G$ is not strongly connected, a contradiction.  Now we examine the case where $a^T$ has more than one nonzero entry, $a_{ij}$ and $a_{ik}$ for $j,k \neq i$.  Let $b_j$ and $b_k$ be the $j^{th}$ and $k^{th}$ rows of $A^T$, respectively, with the $i^{th}$ column removed.  Then we have that $b_ja^T=\lambda a_{ij}$ and $b_ka^T=\lambda a_{ik}$.  Setting the expressions in $\lambda$ equal to each other and clearing denominators, we get that $a_{ik}b_ja^T=a_{ij}b_ka^T$.  We can write this as $a_{ik}(-a_{ij}a_{0j}-a_{ij}\sum_{l: j \rightarrow l \in E}{a_{lj}} + \sum_{P\in{\mathcal{P}(i,j): l(P)=2}} a^P)=a_{ij}(-a_{ik}a_{0k}-a_{ik}\sum_{l: k \rightarrow l \in E}{a_{lk}} + \sum_{P\in{\mathcal{P}(i,k): l(P)=2}} a^P)$, where $\mathcal{P}(i,j)$ and $\mathcal{P}(i,k)$ are the sets of all paths from vertex $j$ to vertex $i$ and  vertex $k$ to vertex $i$, respectively.  Then every term on either side is distinct, and no single term appears on both sides of the equation unless $j = k$.  Distributing and moving everything to one side, this means every term in the polynomial is distinct, and thus is not identically the zero polynomial.  Thus, $a^T$ is not an eigevector of $\tilde{A}^T$ except on this lower dimensional subvariety.  So for a generic choice of parameters, $a^T$ is not an eigevector of $\tilde{A}^T$.

To show that $c_1=c_2=...=c_{m-1}=0$, we must show that the minimal polynomial of $\tilde{A}$ is the characteristic polynomial of $\tilde{A}$, which is true if there are $m$ distinct eigenvalues.  Since there must be some edge from $j$ to $i$, for $i \in Out$ previously chosen and $j \neq i$, then this edge appears along the diagonal and acts like a leak, so that $\det(\tilde{A})$ is nonzero for a generic choice of parameters by Proposition \ref{prop:detnonzero}.  If the graph coresponding to $\tilde{A}$ is strongly connected, then we are done, since then the characteristic polynomial of $\tilde{A}$ is irreducible by Theorem \ref{thm:irred}, and thus it has no repeated roots.  Otherwise, the matrix $\tilde{A}$ will be a block upper triangular matrix after rearranging rows and columns of $\tilde{A}$, where each block corresponds to a strongly connected component.  Then the characteristic polynomial of $\tilde{A}$ factors as the product of the characteristic polynomials of the diagonal blocks.  Each strongly connected component is connected to the rest of the graph, and thus each block has nonzero determinant by Proposition \ref{prop:detnonzero}.  In other words, zero is not a root of any of the characteristic polynomials of the diagonal blocks.  Since each block corresponds to a strongly connected component and is connected to the rest of the graph, then each of the characteristic polynomials of the diagonal blocks are irreducible by Theorem \ref{thm:irred} and thus each characteristic polynomial has no repeated roots.  Since no two blocks have any overlapping parameters, then for a generic choice of parameters of $\tilde{A}$, there will be no common factors among any of the blocks. Thus, in either case, this means there are $m$ distinct eigenvalues.  

Thus the rows $a(\tilde{A})^{k-1}$ for $k=1,...,m-1$ are linearly independent, so there is a unique solution for $\tilde{x}$.  Thus each state variable $x_j$ can be written in terms of parameters, input variables, output variables, and their derivatives.
\end{proof}

\begin{rmk} A state variable $x_j$ in a model $M$ having the property that there exists a polynomial relationship among $x_j$, the parameters, input and output variables, and higher order derivatives of the input and output variables is called \textit{algebraically observable} \cite{Glad, Saccomani2003}. It was shown in \cite{Glad} that the order of an input-output equation is the same as the system order if and only if all variables are algebraically observable from the output.  Thus, a linear compartment model corresponding to a strongly connected graph with at least one leak has the property that all variables are observable by Corollary \ref{cor:relprime}.  In Lemma \ref{lemma:observable}, we found an explicit description of the state variables $x_j$ in terms of parameters, input variables, output variables, and their derivatives, to ensure that this relationship is linear in $x_j$.  This fact will be important for preserving the type of identifiability (i.e. global vs. local) in the tiered system, as shown in Proposition \ref{prop:tiered}.
\end{rmk}

Let $G_1=(V_1,E_1)$ and $G_2=(V_2,E_2)$ be two graphs with vertex sets $V_1=\left\{1,...,m\right\}$ and $V_2=\left\{m+1,...,n\right\}$, respectively.  Let $W_1 = \left\{i_1,...,i_k\right\}\subseteq{V_1}$ and $W_2=\left\{j_1,...,j_k\right\}\subseteq{V_2}$.  We define their union, $G_1 \cup_{W_1 \rightarrow W_2} G_2$, with vertex set $V=V_1 \cup V_2$ and edge set $E=E_1 \cup E_2 \cup E_U$ where $E_U$ is the set of all edges $i_l \to j_l$ where $i_l \in W_1$, $j_l \in W_2$, and $l=1,...,k$.  

\begin{prop} \label{prop:tiered} Let $M_1=(G_1,In_1,Out_1, Leak_1 \cup W_1)$ and $M_2=(G_2,In_2 \cup W_2,Out_2,Leak_2)$ represent two generically globally (resp. generically locally) identifiable models, where $G_1$ and $G_2$ are strongly connected, $Leak_1 \cap W_1 = \emptyset$, and $In_2 \cap W_2 = \emptyset$.  Then the corresponding union of the models, $M=(G_1 \cup_{W_1 \rightarrow W_2} G_2,In_1 \cup In_2,Out_1 \cup Out_2,Leak_1 \cup Leak_2)$ is generically globally (resp. generically locally) identifiable.  
\end{prop}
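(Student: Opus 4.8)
The plan is to exploit the block structure that a one-way flow imposes on the system matrix of $M$. Order the vertices so that $V_1$ precedes $V_2$; then, after identifying each leak rate $a_{0i_l}$ of $M_1$ (for $i_l\in W_1$) with the rate $a_{j_l i_l}$ of the connecting edge $i_l\to j_l$, the matrix $A$ of $M$ is block lower-triangular, with diagonal blocks $A(G_1)$ (the matrix of $M_1$) and $A(G_2)$ (the matrix of $M_2$) and the connecting edges sitting in the off-diagonal block. Thus the parameters of $M$ are the disjoint union of the parameters $\theta_1$ of $M_1$ (the rates of $E_1$, of $Leak_1$, and of the $k=|W_1|$ connecting edges) and the parameters $\theta_2$ of $M_2$ (the rates of $E_2$ and $Leak_2$); the two models share no parameters, and $|\theta_1|+|\theta_2|=|E|+|Leak|$. (If $k=0$ the two submodels are disjoint and there is nothing to do, so assume $k\geq 1$.) First I would observe that, since $G_2$ never flows back into $G_1$, the variables $x_i$ with $i\in V_1$ solve a closed subsystem identical to that of $M_1$, driven only by the inputs $u_j$, $j\in In_1$. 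Hence for each $i\in Out_1$ the input--output equation of $M$ produced by Theorem~\ref{thm:ioeqn} has $\det(\partial I-A)=\det(\partial I-A(G_1))\det(\partial I-A(G_2))$ on the left, the cofactors $\det(A_{ji})$ with $j\in In_1$ factor through $\det(\partial I-A(G_2))$, and those with $j\in In_2$ vanish; cancelling the common factor $\det(\partial I-A(G_2))$ --- legitimate because, by Corollary~\ref{cor:relprime} applied to $M_1$, which has a leak since $W_1\neq\emptyset$, nothing further cancels --- returns exactly the input--output equation of $M_1$. So the coefficient map $c^M$ of $M$ contains all coefficients of $c^{M_1}$ among its components, and these depend on $\theta_1$ only.

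Next I would analyze the outputs $i\in Out_2$. Expanding the cofactors $\det(A_{ji})$ with $j\in In_1$ along the connecting edges (in the spirit of Proposition~\ref{prop:paths}), and using the $G_1$-subsystem relation that expresses $x_{i_l}$ through the inputs $u_j$, $j\in In_1$, shows that the input--output equation of $M$ for $y_i$ is the input--output equation of $M_2$ for $y_i$ in which each $W_2$-input $u_{j_l}$ has been replaced by $a_{j_l i_l}x_{i_l}$. I would then invoke Lemma~\ref{lemma:observable} on the strongly connected graph $G_1$ with outputs $Out_1$ to write each $x_{i_l}$, $i_l\in W_1$, as a single-valued differential expression, linear in the state, in the data $\{u_j:j\in In_1\}\cup\{y_i:i\in Out_1\}$ with coefficients that are rational functions of $\theta_1$ alone (this single-valuedness is exactly the point of the remark preceding the proposition). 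Substituting this in and collecting terms expresses the $Out_2$-coefficients of $c^M$ in terms of $\theta_1$ and of the coefficients of $c^{M_2}$; in particular, once $\theta_1$ is fixed to a generic value, the coefficients of $c^{M_2}$ are recoverable from the $Out_2$-coefficients of $c^M$.

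To finish, I would run this in two stages on generic data for $M$. From the $Out_1$ input--output equations one recovers $\theta_1$ --- generically uniquely if $M_1$ is globally identifiable, generically finitely-to-one if only locally. For each admissible $\theta_1$, Lemma~\ref{lemma:observable} produces the feeding signals $a_{j_l i_l}x_{i_l}$ as explicit single-valued functions of the $In_1/Out_1$ data, so the restriction of $M$ to $G_2$ is precisely the model $M_2$ with all of its inputs (the external $u_j$, $j\in In_2$, together with the now-known inputs at $W_2$) and outputs available; generic (global, resp.\ local) identifiability of $M_2$ then recovers $\theta_2$. Since $\theta_1\sqcup\theta_2$ exhausts the parameters of $M$, this shows $M$ is generically globally (resp.\ generically locally) identifiable. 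For the local statement this is equivalently the assertion that $J(c^M)$ is block lower-triangular along the splitting $(\theta_1,\theta_2)$, with a top-left block of rank $|\theta_1|$ (it is $J(c^{M_1})$, restricted to the $Out_1$-components) and a bottom-right block of rank $|\theta_2|$ (the $\theta_2$-derivative of the $Out_2$-components), hence of full rank $|E|+|Leak|$, and Proposition~\ref{prop:jacobian} applies.

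I expect the main obstacle to be the middle step: one must verify that substituting the observability expression for $x_{i_l}$ and collecting terms does not, for generic parameters, produce a cancellation that loses the information carried by $c^{M_2}$ --- equivalently, that the $W_2$-input coefficients of $M_2$ can still be read off after being multiplied by the $\theta_1$-dependent weights coming from Lemma~\ref{lemma:observable}. This should follow because those weights are generically nonzero and the substituted differential monomials in the data are generically linearly independent, but it is where care is genuinely needed, and it is also the reason the conclusion is a sufficient condition rather than an equivalence.
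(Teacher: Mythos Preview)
Your proposal is correct and follows essentially the same strategy as the paper: exploit the block lower-triangular structure to see that the $V_1$-variables obey exactly the system of $M_1$ (so $\theta_1$ is recovered), then invoke Lemma~\ref{lemma:observable} to turn $B\phi_1$ into a known input at $W_2$, reducing the $V_2$-subsystem to $M_2$ (so $\theta_2$ is recovered). The only difference is presentational: you work through the input--output equations of $M$ explicitly (factoring $\det(\partial I-A)$ and analyzing cofactors), whereas the paper stays at the state-space level, simply observing that $\dot\phi_1=A_1\phi_1+\omega_1$ \emph{is} $M_1$ and $\dot\phi_2=A_2\phi_2+(B\phi_1+\omega_2)$ \emph{is} $M_2$ once $B\phi_1$ is known; this sidesteps the cancellation concern you raise at the end, since identifiability of $M_2$ is invoked directly on the subsystem rather than via a substitution into the full model's input--output equations.
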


\begin{proof} 
We must show that the full model $M$ is generically identifiable, where $G_1 \cup_{W_1 \rightarrow W_2} G_2$ is defined as above.  Let $\phi_1=(x_1,...,x_m)^T$ correspond to the state variables in $M_1$ and let $\phi_2=(x_{m+1},...,x_n)^T$ correspond to the state variables in $M_2$.  Let $\dot{x}=Ax+u$ where $u_j=0$ for $j\notin{In_1 \cup In_2}$ and $y_i=x_i$ for $i \in{Out_1 \cup Out_2}$ be the differential equation system for $M$.  Partition the matrix $A$ in the following way:
$$ A = 
\begin{pmatrix}
A_1 & 0 \\ 
B & A_2
\end{pmatrix}
$$
where $A_1$ is the matrix $A$ restricted to the vertex set $V_1$ and $A_2$ is the matrix $A$ restricted to the vertex set $V_2$.  Note that the parameters in $B$ also appear in $A_1$.  Then the differential equation system $\dot{x}=Ax+u$ for $M$ can be partitioned into $\dot{\phi_1}=A_1\phi_1+\omega_1$ and $\dot{\phi_2}=A_2\phi_2 + B\phi_1 + \omega_2$, where $u=(\omega_1 , \omega_2)^T$.  Since $M_1$ is generically identifiable, then this means each of the parameters in the system  $\dot{\phi_1}=A_1\phi_1+\omega_1$ and $y_i=x_i$ for $i \in Out_1$ are generically identifiable, which means that the parameters in $A_1$ and thus in $B$ are generically identifiable in the full model $M$.  Since $G_1$ is strongly connected, we can use the system $\dot{\phi_1}=A_1\phi_1+\omega_1$ and $y_i=x_i$ for $i \in Out_1$ to get expressions for $\phi_1=(x_1,...,x_m)^T$ in terms of generically identifiable parameters, input variables, output variables, and their derivatives from Lemma \ref{lemma:observable}.

Thus in the system $\dot{\phi_2}=A_2\phi_2 + B\phi_1 + \omega_2$ and $y_i=x_i$ for $i \in Out_2$, each of the nonzero entries in $B\phi_1 + \omega_2$ correspond to known functions, with $u_j \in \omega_2$ nonzero if and only if $j \in In_2$ and the $i^{th}$ entry of $B\phi_1$ is nonzero if and only if $m+i \in W_2$.  Since $A_2$ corresponds to the graph $G_2$ with leak set $Leak_2$, then $\dot{\phi_2}=A_2\phi_2 + B\phi_1 + \omega_2$ and $y_i=x_i$ for $i \in Out_2$ defines the model $M_2=(G_2,In_2 \cup W_2,Out_2,Leak_2)$.  Thus each of the parameters in $A_2$ are generically identifiable in the full model $M$ since $M_2$ is generically identifiable.
\end{proof}

\begin{ex} \label{ex:tieredex}
We now present an example of a generically identifiable model which can be broken into two generically identfiable submodels with a one-way flow.  Our ODE system has the following form:

$$
\begin{pmatrix} 
\dot{x}_1 \\
\dot{x}_2 \\
\dot{x}_3 \\
\dot{x}_4 \\
\dot{x}_5 \end{pmatrix} = {\begin{pmatrix} 
-(a_{01}+a_{21}+a_{41}) & a_{12} & 0 & 0 & 0 \\
a_{21} & -(a_{12}+a_{32}) & a_{23} & 0 & 0 \\
0 & a_{32} & -a_{23} & 0 & 0 \\
a_{41} & 0 & 0 & -a_{54} & a_{45} \\
0 & 0 & 0 & a_{54} & -(a_{05}+a_{45})
\end{pmatrix}} {\begin{pmatrix}
x_1 \\
x_2 \\
x_3 \\
x_4 \\
x_5 \end{pmatrix} } + {\begin{pmatrix}
0 \\
u_1 \\
0 \\
0 \\
0 \end{pmatrix}}
$$
$$ y_1=x_2$$
$$ y_2=x_4.$$

\begin{figure}
\begin{center}
\resizebox{!}{3cm}{
\includegraphics{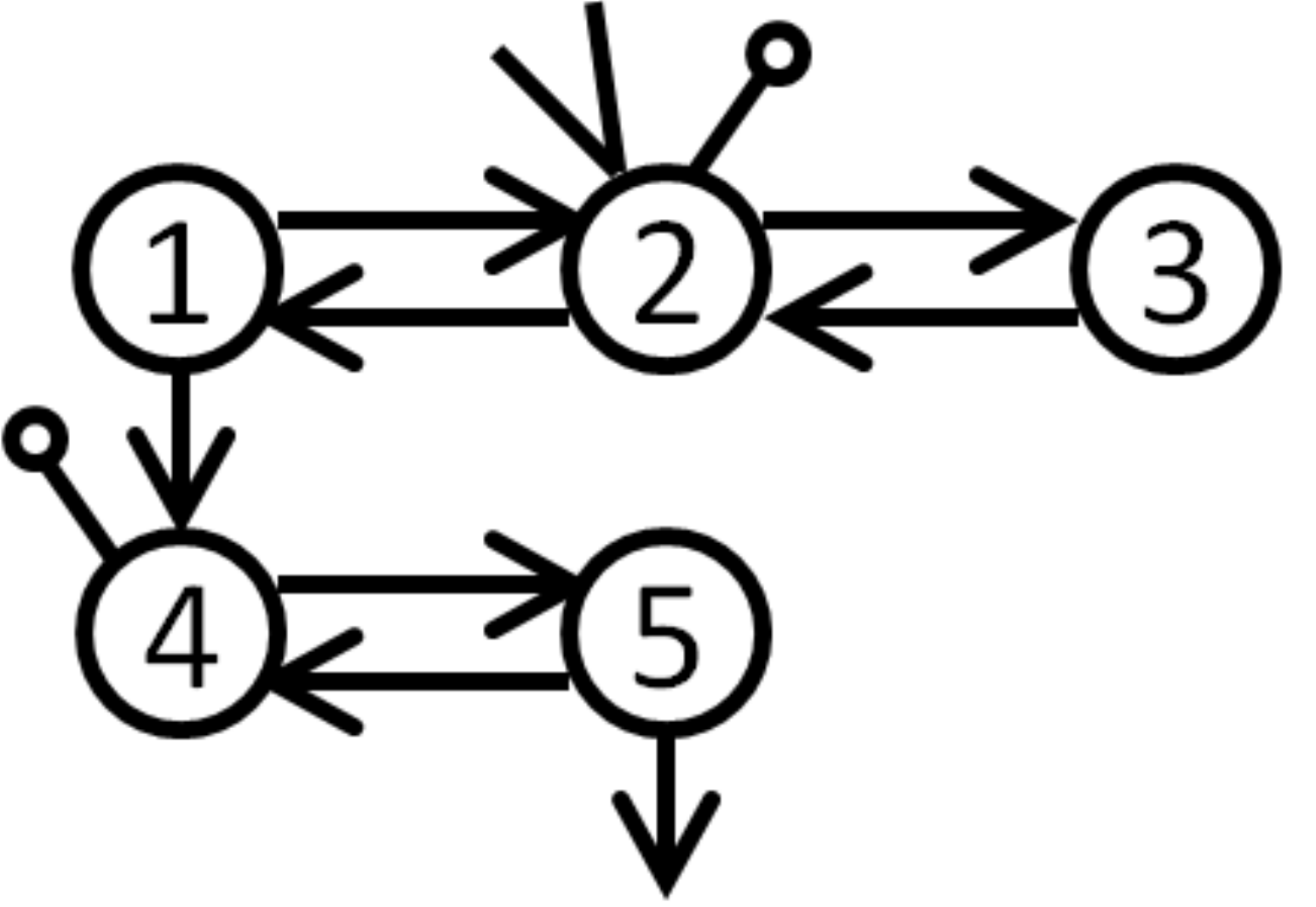}}
\end{center}\caption{A 5-compartment model}
\label{fig:5comptiered}
\end{figure}

The compartment model drawing is displayed in Figure \ref{fig:5comptiered}.  Here $M_1=(G_1,\{2\},\{2\}, \{1\})$ and $M_2=(G_2,\{4\},\{4\},\{5\})$, where $G_1$ is the induced graph on vertices $\{1,2,3\}$, $G_2$ is the induced graph on vertices $\{4,5\}$, and $Leak_1=In_2=\emptyset$.  Note that the graphs $G_1$ and $G_2$ are inductively strongly connected, thus local identifiability can be achieved for the full model using Theorem \ref{thm:theorem1}.

\end{ex}

\begin{rmk} We note that if $a_{11}=a_{33}$ in Example \ref{ex:tieredex}, then every vector is an eigenvector of $\tilde{A_1}^T$ from Proposition \ref{prop:tiered}, and thus the theorem fails because the differential elimination step does not work.  Thus, this theorem holds only for a generic choice of parameters.
\end{rmk}


\section{Examples}\label{sec:examples}

We demonstrate our results on some larger example models with applications to medicine and biology. We begin with two inductively strongly connected examples. 

\begin{ex} \label{ex:mnpk} \textit{Toxicokinetic Model of Manganese in Rats}. We consider a variation of the 11-compartment physiologically based pharmacokinetic model of manganese (Mn) absorption, distribution, and clearance in rats by Douglas et al. \cite{Douglas2010, DouglasThesis}, which was used to address questions regarding clearance of manganese and the potential links between manganism and Parkinson's disease. The network structure of the original model presented in \cite{Douglas2010, DouglasThesis} is not strongly connected, and the strongly connected component has too many edges for application of the theorems here. Thus, we modify the model slightly to include: bidirectional edges for the olfactory epithelium component (noting that diffusion would allow for at least very small reverse direction arrows), a blood-brain barrier/overall brain tissue compartment, and a simplified intestinal absorption pathway, as shown in Figure \ref{fig:MnModel}. The model equations are given by:

$$
\begin{aligned}
\left(
\begin{smallmatrix}
\dot{x}_1\\
\dot{x}_2\\
\dot{x}_3\\
\dot{x}_4\\
\dot{x}_5\\
\dot{x}_6\\
\dot{x}_7\\
\dot{x}_8\\
\dot{x}_9\\
\dot{x}_{10}\\
\dot{x}_{11}\\
\end{smallmatrix} \right)
&=
\left ( \begin{smallmatrix}
a_{11} & a_{12} & a_{13} & a_{14} & 0 & a_{16} & 0 & a_{18} & 0 & 0 & 0\\
a_{21} & a_{22} & 0          & 0           &0 & 0           & 0 &0           & 0& 0 &0\\
a_{31} & 0          & a_{33} & 0 & 0 & 0 & 0 & 0 & 0 & 0 & 0\\
a_{41} & 0 & 0 & a_{44} & 0 & 0 & 0 & 0 & 0 & 0 & 0\\
0 & 0 & 0 & 0 & a_{55} & a_{56} & 0 & 0 & 0 & 0 & 0\\
a_{61} & 0 & 0 & 0 & a_{65} & a_{66} & 0 & 0 & 0 & 0 & 0 \\
0 & 0 & 0 & 0 & 0 & 0 & a_{77} & a_{78} & 0 & 0 & 0\\
a_{81} & 0 & 0 & 0 & 0 & 0 & a_{87} & a_{88} & a_{89} & 0 & a_{811}\\
0 & 0 & 0& 0& 0&0& 0& a_{98} & a_{99} &0&0\\
0 &0&0&0&0&0&0&0&0&a_{1010} & a_{1011}\\
0 & 0&0&0&0&0&0&a_{118}&0&a_{1110}&a_{1111}
\end{smallmatrix} \right )
\left( \begin{smallmatrix}
x_1\\
x_2\\
x_3\\
x_4\\
x_5\\
x_6\\
x_7\\
x_8\\
x_9\\
x_{10}\\
x_{11}\\
\end{smallmatrix} \right)
+ \mathbf{u}
\end{aligned}
$$
where the variable numbering is as given in Figure \ref{fig:MnModel}, and each $a_{ii}$ is the negative sum of all outflows from the $i$th compartment, plus the leak from the gut in the case of $x_5$. The input vector $\mathbf{u}$ is a column vector of potential inputs, in this case inhaled manganese (inputs to $x_1$ and $x_{10}$) (Fig. \ref{fig:MnModel}). The outputs $y$ vary depending on the experiment, for example blood Mn levels considered here ($y = x_1$), or the more extensive imaging study data shown in \cite{Douglas2010, DouglasThesis} (in which case nearly all compartments are measured) both being possibilities. However, we note that as Mn can only be cleared via the gut (i.e. the model has only one leak, at $x_5$), manganese input and measurement from any compartment will result in structural identifiability, as explained below (though additional inputs or measurements will likely improve practical identifiability). If urinary excretion \cite{Greger1990, Scheuhammer1982} was to be included in the model (generating a leak from $x_4$), then measuring both urinary Mn concentrations or kidney imaging data together with fecal Mn or gut imaging data would ensure structural identifiability. 

The resulting model in Figure \ref{fig:MnModel} is inductively strongly connected with respect to vertex $1$ with exactly $2|V|-2=20$ edges, and has an input and output in the first compartment.  Thus with only one leak in the fifth compartment, the model is generically locally identifiable.  Notice there is an additional input (to the tenth compartment) that could be removed and resulting model would still be generically locally identifiable.  We note that this model is particularly large, and determining local identifiability via a symbolic algebra package, e.g. DAISY \cite{Bellu2007}, can take a very long time to compute or possibly fail.  Thus, our main result allows one to determine the local identifiability of this model without any symbolic computation at all.

\begin{figure}
\centering
\includegraphics[width=0.5\textwidth]{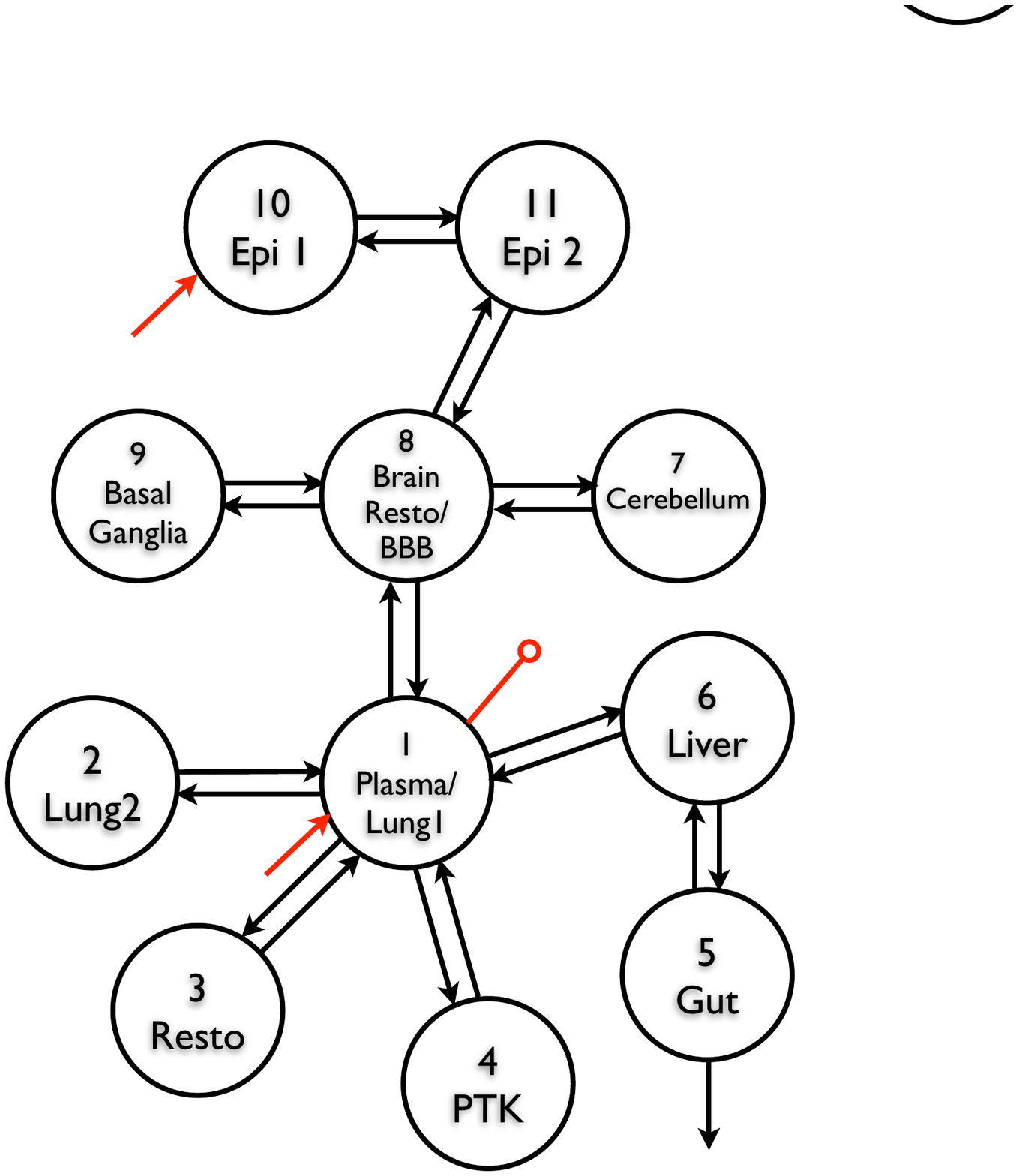}
\caption{Compartmental model of manganese pharmacokinetics in rats, based on \cite{Douglas2010,DouglasThesis}.}
\label{fig:MnModel}
\end{figure}


\end{ex}

\begin{ex} \textit{Tree models}. Compartmental networks which form trees (have no cycles) are often considered, e.g. for diffusion models along rivers and streams, or models of neuronal dendritic trees (e.g. \cite{Bressloff1993}). Assuming bidirectional flow on the trees yields an inductively strongly connected network, so that in general these models are generically identifiable when there are inputs or outputs in all compartments where leaks are present, with one leak compartment having both input and output. 
\end{ex}

Next, we demonstrate an application of our results to a model which is not inductively strongly connected, but is an identifiable cycle model.

\begin{ex} \label{ex:mnpk} \textit{Endosomal Trafficking Dynamics Model}. We consider the five-compartment model of receptor activation and trafficking between the cell membrane and endosome developed by Hori et al. \cite{Hori2006}. Hori et al. apply the model to understanding hepatic insulin receptor dynamics, though their overall model structure is broadly applicable to a range of receptor-ligand trafficking scenarios. In general, receptors and ligands could potentially be degraded in any of the five states, yielding potential leaks from all five compartments (which leaks exist will depend on the scenario considered). The resulting model is shown in Figure \ref{fig:EndosomeModel}, with equations:
$$
\begin{aligned}
\begin{pmatrix}
\dot{x}_1\\
\dot{x}_2\\
\dot{x}_3\\
\dot{x}_4\\
\dot{x}_5\\
\end{pmatrix}
&=
\begin{pmatrix}
a_{11} &      0   &  0   &a_{14}&a_{15}\\
a_{21} &a_{22}&  0    &    0    & 0\\
0          &a_{32}&a_{33}& 0 & 0\\
0 & 0 & a_{43} & a_{44} & 0\\
0 & 0 & 0 & a_{54} & a_{55}
\end{pmatrix}
\begin{pmatrix}
x_1\\
x_2\\
x_3\\
x_4\\
x_5
\end{pmatrix}
+ \mathbf{u}
\end{aligned}
$$
where $\mathbf{u}$ is the vector of inputs, typically given to $x_1$ (e.g. if ligand is introduced), and each $a_{ii}$ is the negative sum of all outflows from the $i$th compartment, plus $a_{0i}$.  The original output equations in \cite{Hori2006} are in terms of sums of compartments, which could then be solved to give the trajectories of $x_1,x_2$, and $x_3$ in terms of measured data. 
We note that this model is not inductively strongly connected (though it is strongly connected), but it can be easily shown that the dimension of the image of the double characteristic polynomial map is $|E|+1=7$ if there are leaks in every compartment and input/output to the first compartment.  Thus,  given measurements from $x_1,x_2,x_3$, we could safely include leaks from $x_1, x_2$, and $x_3$ and maintain model identifiability. 

\begin{figure}
\centering
\includegraphics[width=0.5\textwidth]{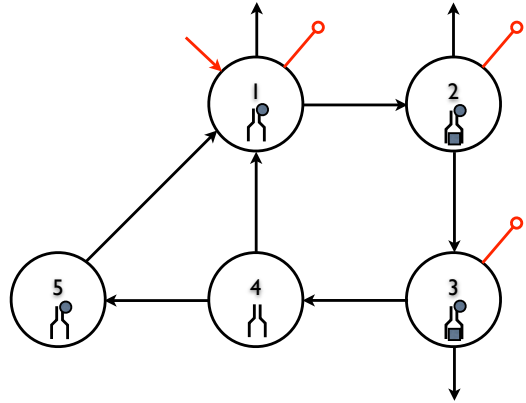}
\caption{Receptor-ligand trafficking model, based on \cite{Hori2006}. In order, the variables represent: bound surface receptor, bound phosphorylated surface receptor, internalized bound phosphorylated receptor, unbound internalized receptor, bound internalized receptor.}
\label{fig:EndosomeModel}
\end{figure}

\end{ex}


\section{Conclusions}

We have shown that, for a class of linear compartment models which we call \textit{identifiable cycle} models, changing the model to have only one leak is sufficient for local identifiability.  We have also shown that changing an identifiable cycle model to have every leak compartment correspond to an input or output compartment, with the first compartment containing both input and output, is sufficient for local identifiability.  Finally, we have shown how to combine generically identifiable models each corresponding to strongly connected graphs in a unidirectional way to obtain a generically identifiable model.

These theorems are particularly helpful for linear compartment models corresponding to inductively strongly connected graphs, for in this case no actual symbolic computation is required to test if a given model is an identifiable cycle model.  The fact that inputs can be added instead of outputs is helpful because, practically speaking, this doesn't necessarily require taking more data and it is often easier to perturb a compartment than to measure from it over time.  Even for extremely large linear models, (e.g. those occurring in hydrodynamics, machine learning applications, etc.), where it may be difficult to visually assess if a model is inductively strongly connected, symbolically testing if a model is inductively strongly connected is much faster than the Laplace transform or differential algebra methods.  However, we note that we have only proven sufficient conditions for local identifiability, and thus if our model is not inductively strongly connected or does not have input, output, and leak compartments satisfying our requirements, this does not imply the model is unidentifiable.  Future directions for this work include expanding the class of models for which a sufficient condition for identifiability exists and also finding necessary conditions for identifiable cycle models.


\section*{Acknowledgments}

Nicolette Meshkat was partially supported by the David and Lucille Packard Foundation.
Seth Sullivant was partially supported by the David and Lucille Packard 
Foundation and the US National Science Foundation (DMS 0954865).

\end{document}